\pgfplotsset{compat=1.18} 
\theoremstyle{plain}
\newtheorem{thm}{Theorem}[section]
\newtheorem{prop}[thm]{Proposition}
\newtheorem*{prop*}{Proposition}
\newtheorem{lem}[thm]{Lemma}
\newtheorem*{thm*}{Theorem}
\theoremstyle{definition} 
\newtheorem{ex}[thm]{Example}
\newtheorem{defn}[thm]{Definition}
 \newtheorem{notation}[thm]{Notation}
\Crefname{thm}{Theorem}{Theorems}
\Crefname{prop}{Proposition}{Propositions}
\Crefname{lem}{Lemma}{Lemmas}
\Crefname{cor}{Corollary}{Corollaries}
\Crefname{defn}{Definition}{Definitions}
\Crefname{tab}{Table}{Tables}
\Crefname{ex}{Example}{Examples}
\Crefname{chap}{Champter}{Chapters}
\Crefname{app}{Appendix}{Appendices}
\newcommand{\cp}{\mathrm{CP}}
\newcommand{\cpinf}{\cp^\infty}
\newcommand{\Rep}[1]{\mathrm{Rep}\br{#1}}
\newcommand{\hpair}[2]{\langle #1 , #2 \rangle}
\newcommand{\ra}{\rightarrow}
\newcommand{\End}[1]{\mathrm{End}(#1)}
\newcommand{\br}[1]{(#1)}                               
\newcommand{\ot}{\otimes}
\newcommand{\id}{\mathrm{id}}
\newcommand{\CC}{\mathbb{C}}
\newcommand{\WCat}{W^*\text{-}\mathrm{Cat}}
\newcommand{\WAlg}{W^*\text{-}\mathrm{Alg}}
\newcommand{\Mod}{\mathrm{Mod}}
\newcommand{\Endo}{\mathrm{End}}
\newcommand{\Hom}{\mathrm{Hom}}
\newcommand{\ignore}[1]{}
\newcommand{\op}{\mathrm{op}}
\newcommand{\Chan}{\mathrm{Chan}}
\newcommand{\Hilb}{\mathrm{Hilb}}
\begin{document}

\title{$\cpinf$ and beyond: 2-categorical dilation theory
}

\author[1,2]{Robert Allen\thanks{r.allen@bristol.ac.uk}}
\author[1]{Dominic Verdon\thanks{dominic.verdon@bristol.ac.uk}}
\affil[1]{School of Mathematics, University of Bristol}
\affil[2]{Heilbronn Institute for Mathematical Research}

\maketitle

\begin{abstract}
The problem of extending the insights and techniques of categorical quantum mechanics to infinite-dimensional systems was considered in (Coecke and Heunen, 2016). In that work the $\mathrm{CP}^{\infty}$-construction, which recovers the category of Hilbert spaces and quantum operations from the category of Hilbert spaces and bounded linear maps, was defined. Here we show that by a `horizontal categorification' of the $\mathrm{CP}^{\infty}$-construction, one can recover the category of all von Neumann algebras and channels (normal unital completely positive maps) from the 2-category of von Neumann algebras, bimodules and intertwiners. As an application, we extend Choi's characterisation of extremal channels between finite-dimensional matrix algebras to a characterisation of extremal channels between arbitrary von Neumann algebras. 
\end{abstract}
\section{Introduction}

\subsection{Motivation}

\paragraph{Representations of quantum channels.}

One approach to quantum theory is to identify physical systems with von Neumann algebras of observables on those systems, while dynamical maps between systems are identified with normal (i.e. weak $*$-continuous)  unital completely positive (CP) maps between these observable algebras, which are conventionally called \emph{channels}. A key advantage of this algebraic approach to quantum theory is that it includes not only quantum-to-quantum, but also classical-to-quantum, quantum-to-classical and classical-to-classical dynamics~\cite[\S{}3.2]{Holevo2003}.

The Hilbert space formulation of quantum theory is recovered from the von Neumann algebraic formulation by a representation result for quantum channels called \emph{Stinespring's theorem}, which we now state in an appealing form given in~\cite[Cor. 12]{CH16}. We write $B(X)$ for the von Neumann algebra of bounded operators on a Hilbert space $X$.
\begin{quote}
Let $X,Y$ be Hilbert spaces and let $f: B(X) \to B(Y)$ be a normal CP map. There exists a pair $(E,V)$ of a Hilbert space $E$ (the \emph{environment}) and a bounded linear map $V: Y \to X \otimes E$, such that the channel is defined by
$$
f(a) = V^{\dagger} \circ (a \otimes \id_E) \circ V
$$
We call the pair $(E,V)$ a \emph{representation} of the completely positive map $f$. The map $f$ is unital iff $V$ is an isometry. 
\end{quote}
\noindent
As a string diagram in the category $\Hilb$ of all Hilbert spaces and bounded linear maps (we read diagrams from bottom to top):
\begin{align}\label{eq:hilbstinspringconj}
    \includegraphics[valign=c]{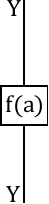}~~=~~\includegraphics[valign=c]{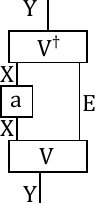}
\end{align}

\noindent
The limitation of this formulation of Stinespring's theorem is that it only applies to maps between von Neumann algebras of all bounded operators on a Hilbert space. 

\paragraph{Categorical quantum mechanics and CP$^\infty$.} 
The programme of categorical quantum mechanics~\cite{Abramsky2004,Heunen2019} studied quantum theory from the perspective of the $W^*$-tensor category $\Hilb$ of Hilbert spaces and linear maps. 
One goal of categorical quantum mechanics was to recover von Neumann algebras and channels via a categorical construction from $\Hilb$. 

In the case of finite-dimensional (f.d.) von Neumann algebras, this is possible using Frobenius algebras. A special symmetric Frobenius algebra in $\Hilb$ corresponds precisely to a f.d. von Neumann algebra equipped with its canonical special trace~\cite{Vicary2011}. Furthermore, CP maps between f.d. von Neumann algebras can be identified with morphisms between the corresponding Frobenius algebras obeying a certain positivity condition~\cite{Coecke2016}\cite[\S{}7.2.1]{Heunen2019}. The algebraic theory can therefore be recovered as a theory of Frobenius algebras in $\Hilb$, in the finite-dimensional case. 

Unfortunately, this does not generalise to infinite-dimensional von Neumann algebras, since Frobenius algebras, being self-dual, are necessarily finite-dimensional. To resolve this problem an alternative construction, the \emph{$\mathrm{CP}^{\infty}$ construction}, was proposed~\cite{CH16}. The objects of the category $\mathrm{CP}^{\infty}(\Hilb)$ are objects of $\Hilb$, while morphisms $X \to Y$ are equivalence classes of isometries $Y \to X \otimes E$ in $\Hilb$, where two isometries are equivalent iff they induce the same map $B(X) \to B(Y)$ by the conjugation~\eqref{eq:hilbstinspringconj}. By the formulation of Stinespring's theorem stated above, this construction recovers the category of von Neumann algebras $B(H)$ of all bounded operators on a Hilbert space and channels between them. However, it does not recover the category of all von Neumann algebras and channels.

 \paragraph{\texorpdfstring{$W^*$-tensor categories}{W*-tensor categories.}}
A dagger category with direct sums is a $W^*$-category if and only if every endomorphism space is a von Neumann algebra. This can be thought of as a horizontal categorification of a von Neumann algebra.
$W^*$-categories were first defined in~\cite{GLR85}, and developed further in \cite{Yamagami2007,FW19,CHJP22}, among others.
$W^*$-categories have found applications in compact quantum groups, subfactors and algebraic quantum field theory.

The tensor category $\Hilb$ is the prototypical example of a $W^*$-category. In this work we will make essential use of this $W^*$-structure to recover the theory of  von Neumann algebras and channels from $\Hilb$. In fact, we will find that this calls for a further categorification, from the $W^*$-tensor category $\Hilb$ to the $W^*$-2-category of $W^*$-categories, normal functors and bounded natural transformations.



\subsection{Our results}

In this work we generalise the Stinespring theorem stated above to normal completely positive maps between arbitrary von Neumann algebras. To achieve this we allow the CP map to be represented not just in the $W^*$-tensor category $\Hilb$, but in the $W^*$-2-category of von Neumann algebras, bimodules and intertwiners~\cite{Landsman2000}. We are thereby able to construct the category of all von Neumann algebras and channels by a natural generalisation of the CP$^{\infty}$-construction. 

\subsubsection{Two equivalent \texorpdfstring{$W^*$-2-categories}{W*-2-categories}}\label{sec:introw*}

We begin by defining the 2-category $\WAlg$, introduced in \cite[\S{}5]{Landsman2000} under the name $[W^*]$:

\begin{itemize}
    \item \emph{Objects}: von Neumann algebras $A,B,\dots$.
    \item \emph{1-morphisms:} $X,Y,\dots: A \to B$ are $A,B$-bimodules, with composition given by Connes' fusion tensor product (which we write as $- \otimes -$).
    \item \emph{2-morphisms:} $f,g,\dots:X \to Y$ are bounded linear maps intertwining the bimodule actions.
\end{itemize}
Explicit definitions of these structures will be recalled in Section~\ref{sec:bimodules}. The 2-category $\WAlg$ is a $W^*$-2-category~\cite{Yamagami2007}; in particular, its $\Hom$-categories are $W^*$-categories~\cite{GLR85}. 

There is an alternative, functorial definition (due to~\cite{Yamagami2007}) of an equivalent $W^*$-2-category, which we will denote $\WCat$. Given some basic assumptions on $W^*$-categories which will be stated in Section~\ref{sec:wcats}, this functorial definition is as follows:
\begin{itemize}
    \item \emph{Objects:} $W^*$-categories. 
    \item \emph{1-morphisms:} Normal unitary linear functors. 
    \item \emph{2-morphisms:} Bounded natural transformations. 
\end{itemize}
\noindent
The equivalence is the usual one, which takes a von Neumann algebra to its $W^*$-category of right modules; in particular, the object $\mathbb{C}$ of $\WAlg$ (i.e. the 1-dimensional von Neumann algebra) is taken to the object $\Hilb$ in $\WCat$.

\begin{notation}
Our results are indifferent to which of the equivalent 2-categories $\WAlg$ and $\WCat$ are being considered, so we will use notation which is also independent of this choice. We will use $r,s,t,\dots$ for objects, independently of whether they are $W^*$-categories or von Neumann algebras; $X,Y,Z,\dots$ for 1-morphisms; and $f,g,h,\dots$ for 2-morphisms. The objects $\mathbb{C}$ in $\WAlg$, and $\Hilb$ in $\WCat$ will play a special role in what follows, and we will use the name $r_0$ for either of these objects. 
\end{notation}


\subsubsection{A classification of von Neumann algebras in \texorpdfstring{$\WAlg$}{W*-Alg}}

Let $r$ be any object of $\WAlg$, and let $X$ be a 1-morphism $r_0 \to r$. The algebra $\Endo(X)$ is a von Neumann algebra.  (This is simply a consequence of $\WAlg$ being a $W^*$-2-category.) 

Let $\Hom(r_0,r)$ be the $W^*$-category of $1$-morphisms $r_0 \to r$. We say that a 1-morphism $X: r_0 \to r$ is a \emph{generator} in this category, or a \emph{generating 1-morphism}, if every other 1-morphism $r_0 \to r$ is a subobject of a (possibly infinite) direct sum of copies of $X$; that is, there exists an isometric 2-morphism from the object into the direct sum. Every von Neumann algebra is obtained as $\Endo(X)$ for some generating 1-morphism $X$ out of $r_0$ in $\WAlg$. This extends to a classification of von Neumann algebras in terms of such 1-morphisms. We say that two 1-morphisms $X: r_0 \to r$ and $Y: r_0 \to s$ are \emph{unitarily equivalent} if there exists an equivalence 1-morphism $E: r \to s$ and a unitary 2-morphism $U: Y \to X \otimes E$. 
\begin{prop}\label{prop:vnclassification}
Let $X: r_0 \to r$ and $Y: r_0 \to s$ be generating 1-morphisms in $\WAlg$. The von Neumann algebras $\Endo(X)$ and $\Endo(Y)$ are:
\begin{itemize}
    \item Morita equivalent iff the objects $r$ and $s$ are equivalent in $\WAlg$.
    \item $*$-isomorphic iff there is a unitary equivalence $X \simeq Y$.
\end{itemize}
\end{prop}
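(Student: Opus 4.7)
The plan is to exploit the equivalence $\WAlg \simeq \WCat$ and argue on the functorial side, where the statement reduces to a version of the Morita theorem for $W^*$-categories. In $\WCat$, a 1-morphism $X : r_0 \to r$ is a normal unitary functor $\Hilb \to r$, determined up to natural isomorphism by the object $X_0 := X(\mathbb{C}) \in r$; under this identification $\Endo(X)$ is $\Endo_r(X_0)$, and the generator condition on $X$ translates to $X_0$ being a $W^*$-categorical generator of $r$ in the usual sense. Similarly, a unitary equivalence $X \simeq Y$ is the same datum as a pair $(E, U)$ of an equivalence $E : r \to s$ of $W^*$-categories and a unitary $U : Y_0 \to E(X_0)$ in $s$.

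The key technical ingredient is the following representation lemma: for any $W^*$-category $r$ with generator $X_0$, the representable functor
\[
F := \Hom_r(X_0, -) : r \longrightarrow \mathrm{Mod}_{\Endo_r(X_0)}
\]
is an equivalence of $W^*$-categories, where $\mathrm{Mod}_A$ denotes the $W^*$-category of normal right $A$-modules. Faithfulness is immediate from $X_0$ being a generator; fullness and essential surjectivity use that $r$ has all (possibly infinite) direct sums and that orthogonal projections in $r$ split. A quasi-inverse can be produced by presenting any normal right $\Endo_r(X_0)$-module as $p\cdot X_0^{\oplus I}$ for some index set $I$ and projection $p \in \Endo_r(X_0^{\oplus I})$, a construction which makes sense inside $r$ because of the $W^*$-structure.

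Granted the lemma, both bullets follow quickly. For the first, Morita equivalence of $\Endo(X)$ and $\Endo(Y)$ is by definition equivalence of $\mathrm{Mod}_{\Endo(X)}$ and $\mathrm{Mod}_{\Endo(Y)}$, which by the lemma is the same as $r \simeq s$ in $\WCat$; conversely, any equivalence $E : r \to s$ sends the generator $X_0$ to the generator $E(X_0)$ of $s$, inducing a $*$-isomorphism $\Endo_r(X_0) \cong \Endo_s(E(X_0))$, and since $E(X_0)$ and $Y_0$ are both generators of $s$ their endomorphism algebras are Morita equivalent by the lemma applied to $s$. For the second bullet, a unitary equivalence $(E,U)$ yields a $*$-isomorphism $\Endo(X) \to \Endo(Y)$ by composing the $E$-induced map with conjugation by $U$; conversely, a $*$-isomorphism $\phi : \Endo(X) \to \Endo(Y)$ induces an equivalence $\phi_\ast : \mathrm{Mod}_{\Endo(X)} \simeq \mathrm{Mod}_{\Endo(Y)}$ which, composed with the lemma's equivalences, produces $E : r \to s$ with $E(X_0) \cong Y_0$, and polar decomposition in the $W^*$-category $s$ promotes this isomorphism to a unitary.

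The principal obstacle is the representation lemma, specifically its essential surjectivity clause: one must check that $F$ lands on \emph{all} normal right $\Endo_r(X_0)$-modules, and that the resulting equivalence is compatible with the full $W^*$-structure (infinite direct sums, splitting of projections, the dagger). This is precisely where the $W^*$-hypotheses on $r$ are needed, rather than merely abelian or Cauchy-complete hypotheses, and where the infinite-dimensional setting demands the most care; once established, the rest of the proposition is essentially a bookkeeping exercise in the dictionary between $\WCat$ and the language of generators and endomorphism algebras.
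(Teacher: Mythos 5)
Your proposal is correct and follows essentially the same route as the paper: the paper's proof likewise rests on the fact that a $W^*$-category with a generator is determined up to unitary equivalence by the endomorphism algebra of that generator (established there via infinite direct sums and idempotent splitting, exactly as in your sketch), and your representation lemma $r \simeq \mathrm{Mod}_{\Endo_r(X_0)}$ is just the paper's direct construction of an equivalence from a $*$-isomorphism $\phi$, factored through the intermediate module category. The only presentational wrinkle is that $\Hom_r(X_0,-)$ naturally lands in self-dual Hilbert $\Endo(X)$-modules rather than Hilbert-space modules, which is harmless given the equivalence between those two categories recorded in Proposition~\ref{prop:baillet}.
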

\noindent
Here Morita equivalence for von Neumann algebras is the standard notion found in~\cite{Rie74}.

\subsubsection{Stinespring's theorem}

We are ready to state the 2-categorical generalisation of Stinespring's theorem. We say that a 2-morphism $f: X \to Y$ in a $W^*$-2-category is an \emph{isometry} if $f^{\dagger} \circ f = \id_X$, and a \emph{partial isometry} if $f^{\dagger} \circ f$ is a projection in the $W^*$-algebra $\End{X}$.
\begin{thm}[Generalised Stinespring's theorem]\label{thm:stinespring}
Let $r,s$ be objects of $\WAlg$, and let $X: r_0 \to r$ and $Y: r_0 \to s$ be generating 1-morphisms. 

Let $f: \Endo(X) \to \Endo(Y)$ be a normal completely positive map. Then there exists a 1-morphism $E: r \to s$ (the `environment') and a 2-morphism $V: Y \to X \otimes E$ in $\WAlg$, such that 
$$
f(a) = V^{\dagger} \circ (a \otimes \id_{E}) \circ V
$$
We call the pair $(E,V)$ a \emph{representation} of the CP map $f$.  The CP map $f$ is unital iff $V$ is an isometry.

Two pairs $(E,V)$ and $(E',V')$ are representations of the same CP map iff there exists a partial-isometric 2-morphism $\sigma: E \to E'$ such that 
\begin{align}\label{eq:fintertwiner}
V' = (\id \otimes \sigma) \circ V && V = (\id \otimes \sigma^{\dagger}) \circ V'
\end{align}
Indeed, every CP map $f: \Endo(X) \to \Endo(Y)$ has a \emph{minimal representation}: that is, an initial object in the dagger category $\Rep{f}$ whose objects are Stinespring representations $(E,V: Y \to X \otimes E)$ of $f$ and whose morphisms $(E,V) \to (E',V')$ are 2-morphisms $\sigma: E \to E'$ in $\WAlg$ satisfying~\eqref{eq:fintertwiner}.
\end{thm}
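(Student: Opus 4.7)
The overall strategy is to reduce to the classical Paschke--Stinespring theorem for the normal CP map $f: \Endo(X) \to \Endo(Y)$ between von Neumann algebras, and then transport the resulting data into $\WAlg$ via Proposition~\ref{prop:vnclassification}. First I would apply Paschke dilation to $f$, obtaining a self-dual Hilbert $\Endo(Y)$-module $\mathcal{M}$, a normal $*$-representation $\pi: \Endo(X) \to \mathcal{L}_{\Endo(Y)}(\mathcal{M})$, and a vector $\xi \in \mathcal{M}$ satisfying $f(a) = \langle \xi, \pi(a)\xi\rangle$. By Proposition~\ref{prop:vnclassification}, the generating 1-morphisms $X$ and $Y$ implement Morita equivalences identifying $\Hom(r, s)$ with the $W^*$-category of normal $\Endo(X)$-$\Endo(Y)$-bimodules; under this identification $\mathcal{M}$ corresponds to a 1-morphism $E: r \to s$ with $X \otimes E$ canonically isomorphic to $\mathcal{M}$ as an $\Endo(X)$-$\Endo(Y)$-bimodule, the action $a \otimes \id_E$ matching $\pi(a)$. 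The Stinespring vector $\xi$, viewed as a right $\Endo(Y)$-module morphism $\Endo(Y) \to \mathcal{M}$, then corresponds to a 2-morphism $V: Y \to X \otimes E$ in $\WAlg$, and the Paschke identity becomes precisely $f(a) = V^\dagger \circ (a \otimes \id_E) \circ V$.

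\textbf{Unitality, uniqueness, and minimality.} The unitality clause is immediate: $V^\dagger \circ V = f(\id_X)$, which equals $\id_Y$ iff $f$ is unital. For the uniqueness and minimality statements, the key 2-categorical input is a Galois-type correspondence, available thanks to the generating property of $X$, between subobjects $E_0 \hookrightarrow E$ and $\Endo(X)$-stable subobjects of $X \otimes E$ (those stable under $a \otimes \id_E$ for all $a$). For any representation $(E, V)$, the smallest $\Endo(X)$-stable subobject of $X \otimes E$ containing the image of $V$ thus corresponds to a canonical subobject $E_{\min} \hookrightarrow E$, and cutting down to it yields a minimal representation $(E_{\min}, V_{\min})$. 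Given two representations $(E, V)$ and $(E', V')$ of the same $f$, a direct computation using the Stinespring identity --- both sides compute inner products of the form $\langle y, f(a^\dagger b) y'\rangle$ on elements $(a\otimes \id)V y$ and $(b \otimes \id)V y'$ --- shows that the minimal subobjects $E_{\min} \subseteq E$ and $E'_{\min} \subseteq E'$ are canonically unitarily isomorphic. Extending this unitary by zero on the orthogonal complements gives the desired partial isometry $\sigma: E \to E'$ satisfying both intertwining relations, and initiality of $(E_{\min}, V_{\min})$ in $\Rep{f}$ follows immediately, since the inclusion of $E_{\min}$ into any other $E$ is the unique morphism.

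\textbf{Main obstacle.} The main technical work is in the transport step: making rigorous the identification of $X \otimes E$ with $\mathcal{M}$ as an $\Endo(X)$-$\Endo(Y)$-bimodule requires careful bookkeeping of the Morita equivalences supplied by Proposition~\ref{prop:vnclassification}, in particular verifying that the tensor action $a \otimes \id_E$ of $\Endo(X)$ on $X \otimes E$ genuinely matches Paschke's representation $\pi$ and that $\xi$ lifts to a bona fide 2-morphism out of $Y$. The Galois correspondence on subobjects used in the uniqueness step is also a nontrivial consequence of the generating property (it amounts to the fact that the relative commutant of $\Endo(X)\otimes 1$ inside $\Endo(X \otimes E)$ is exactly $1 \otimes \Endo(E)$); once this is established, everything else is either a brief computation (unitality) or a standard Stinespring-uniqueness argument translated into the 2-categorical framework.
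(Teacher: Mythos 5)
Your proposal is correct and, for the existence part, follows essentially the paper's route: the paper's ``special case'' is exactly the Paschke-type GNS construction you invoke (carried out in full, including the verification that the left action is weak-$*$ continuous and the passage to the self-dual completion via Rieffel --- the one genuinely delicate analytic step, which you fold into ``apply Paschke dilation'' and should at least cite explicitly), and its ``general case'' is the transport along the Morita equivalences $E_X: r \to \Endo(X)$ and $E_Y: \Endo(Y) \to s$ supplied by the generating hypothesis, which is precisely the bookkeeping you flag as the main obstacle. Where you genuinely diverge is in the treatment of uniqueness and minimality: the paper packages the transport as an explicit unitary equivalence of dagger categories $\Rep{f} \simeq \Rep{f'}$ built from the unitaries $U_X, U_Y$ and the snake equations, so that the minimal object of $\Rep{f'}$ --- constructed by the same cut-down-to-the-closed-span argument you describe --- is carried automatically to a minimal object of $\Rep{f}$; you instead work directly in $\WAlg$, using full faithfulness of $X \otimes -$ (equivalently the relative commutant identity identifying $\Endo(E)$ with the commutant of $\Endo(X) \otimes 1$ inside the $s$-linear endomorphisms of $X \otimes E$, which does hold because a generator implements a Morita equivalence between $r$ and $\Endo(X)$) to match subobjects of $E$ with $\Endo(X)$-stable subobjects of $X \otimes E$, and then build the connecting partial isometries by hand. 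Both routes work; the paper's has the advantage that the equivalence $\Rep{f} \simeq \Rep{f'}$ is reused later (Lemma~\ref{lem:unitarytominimal}, Proposition~\ref{prop:*hom}), while yours makes the role of the generator hypothesis more transparent. One small attribution to fix: Proposition~\ref{prop:vnclassification} as stated does not itself identify $\Hom(r,s)$ with $\Endo(X)$-$\Endo(Y)$-bimodules; that identification comes from its proof together with the equivalence of $\WAlg$ and $\WCat$.
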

\noindent
As a string diagram in the $W^*$-2-category $\WAlg$:
\begin{align*}
\includegraphics[valign=c]{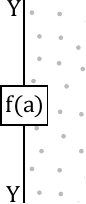} ~~~~~=~~~~~ \includegraphics[valign=c]{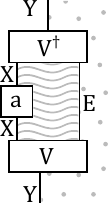}.
\end{align*}
Here we have left the regions corresponding to the object $r_0$ unshaded, and shaded the regions corresponding to the object $r$ with wavy lines and the regions corresponding to the object $s$ with polka dots.

Note that all the usual properties of a minimal representation follow from it being an initial object in a dagger category; it is unique up to a unitary satisfying~\eqref{eq:fintertwiner} and is related to every other representation by a unique isometry satisfying~\eqref{eq:fintertwiner}.

We think it worthwhile to state the following characterisation of $*$-homomorphisms, which follows straightforwardly from the categorical framework. 
\begin{prop}\label{prop:*hom}
    Let $f: \Endo(X) \to \Endo(Y)$ be a normal CP map and let $(E,V: X \to Y \otimes E)$ be a minimal representation. The map $f$ is a unital $*$-homomorphism iff the 2-morphism $V$ is unitary. 
\end{prop}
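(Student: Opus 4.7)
The forward direction is immediate: if $V$ is unitary then $V^{\dagger}V = \id_Y$ makes $f$ unital, and $VV^{\dagger} = \id_{X\otimes E}$ gives $f(ab) = V^{\dagger}(a\otimes\id)VV^{\dagger}(b\otimes\id)V = f(a)f(b)$; complete positivity already forces $*$-preservation, so $f$ is a unital $*$-homomorphism.

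For the converse, assume $f$ is a unital $*$-homomorphism, so $V$ is an isometry and $p := VV^{\dagger}$ is a projection in $\End(X\otimes E)$. The plan is to turn multiplicativity of $f$ into commutativity data for $p$ and then kill $p$ off using minimality. Rewriting $f(a^*a) = f(a)^*f(a)$ as $V^{\dagger}(a^*\otimes\id)(\id - p)(a\otimes\id)V = 0$ exhibits a 2-morphism $T := (\id - p)(a\otimes\id)V$ with $T^{\dagger}T = 0$; positivity in the $W^*$-category $\Hom(r_0,s)$ then forces $T = 0$. Hence $(a\otimes\id)V = Vf(a)$ for all $a \in \End(X)$, and taking daggers (using $*$-preservation) yields $V^{\dagger}(a\otimes\id) = f(a)V^{\dagger}$. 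Combining these identities gives $p(a\otimes\id) = Vf(a)V^{\dagger} = (a\otimes\id)p$, so $p$ commutes with the left $\End(X)\otimes\id_E$-action on $X\otimes E$.

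The key structural step is then to identify such a projection with a subobject of $E$. By the standard properties of the Connes fusion tensor product in $\WAlg$ (to be recalled in Section~\ref{sec:bimodules}), projections in the commutant of $\End(X)\otimes\id_E$ inside $\End(X\otimes E)$ are exactly those of the form $\id_X \otimes p'$ for a projection $p' \in \End(E)$. Let $\iota : E' \hookrightarrow E$ be a corresponding isometric sub-1-morphism, so $p = \id_X \otimes \iota\iota^{\dagger}$. Setting $V' := (\id_X \otimes \iota^{\dagger})V$, the identity $V = pV$ shows $V = (\id_X \otimes \iota)V'$, and a short computation (using the commutation of $p$ with $a \otimes \id$) confirms that $(E', V')$ is itself a Stinespring representation of $f$.

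Initiality of $(E, V)$ in $\Rep{f}$ now supplies a unique partial-isometric morphism $\sigma : (E, V) \to (E', V')$; the composite $\iota\sigma : E \to E$ is then easily checked to be a self-morphism of the initial object $(E, V)$, and the uniqueness clause of initiality forces $\iota\sigma = \id_E$. Combined with $\iota^{\dagger}\iota = \id_{E'}$ this gives $\iota\iota^{\dagger} = \iota(\iota^{\dagger}\iota)\sigma = \iota\sigma = \id_E$, so $p = \id_{X\otimes E}$ and $V$ is unitary. The main obstacle in this plan is the identification of commuting projections as $\id_X \otimes p'$: everything else is formal dagger-categorical manipulation, but this structural fact about Connes fusion is what does the real work and must be imported from the bimodule theory of $\WAlg$.
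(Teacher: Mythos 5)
Your proof is correct, but it takes a genuinely different route from the paper's. The paper transports the problem along the equivalence $\Phi:\Rep{f}\to\Rep{f'}$ built in the proof of Theorem~\ref{thm:stinespring}, so that $f$ becomes a unital normal $*$-homomorphism $f'$ between standard forms $\Endo(L^2(A)_A)$ and $\Endo(L^2(B)_B)$; it then exhibits an \emph{explicit} unitary representation of $f'$ on the bimodule ${}_AL^2(B)_B$ (with $V$ the canonical unitary $L^2(A)\otimes{}_AL^2(B)_B\cong L^2(B)_B$), invokes Lemma~\ref{lem:unitarytominimal} (any unitary representation is minimal), and concludes by uniqueness of minimal representations up to unitaries. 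You instead work directly with the given minimal $(E,V)$: a multiplicative-domain computation shows $p=V\circ V^{\dagger}$ commutes with $\Endo(X)\otimes\id_E$, you identify $p=\id_X\otimes p'$ for a projection $p'\in\Endo(E)$, and initiality kills the complementary summand. Your route stays almost entirely inside dagger-categorical algebra and constructs no new bimodule; the paper's route trades that for an explicit GNS-type construction and thereby never needs your commutant identification.

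That identification, $(\Endo(X)\otimes\id_E)'\cap\Endo(X\otimes E)=\id_X\otimes\Endo(E)$, is the one step you should source more carefully: it is \emph{not} recalled in Section~\ref{sec:bimodules}, and it is not a generic feature of Connes fusion. It holds here because $X$ is a \emph{generator}, so $X$ is an invertible $\Endo(X)$-$r$-bimodule and $X\otimes_r-$ is a fully faithful (indeed an equivalence) functor from $r,s$-bimodules to $\Endo(X),s$-bimodules --- the same Morita-theoretic mechanism as in Proposition~\ref{prop:vnclassification}; full faithfulness is precisely the statement that every element of that commutant equals $\id_X\otimes m$ for a unique $m\in\Endo(E)$. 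With that justification supplied the rest is sound. Two minor simplifications: $\iota^{\dagger}$ itself already satisfies~\eqref{eq:fintertwiner}, so initiality gives $\sigma=\iota^{\dagger}$ at once and the closing computation reduces to $p'=\iota\circ\iota^{\dagger}=\iota\circ\sigma=\id_E$; and $(E',V')$ being a representation follows in one line from $p\circ V=V$.
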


\subsubsection{\texorpdfstring{The CP$^\infty$-construction}{The CP-infinity construction}}

The 2-categorical generalisation of the CP$^{\infty}$ construction is now straightforward to state. Let $\mathrm{CP}$ be the category whose objects are von Neumann algebras and whose 1-morphisms are normal completely positive maps.  Let $\Chan \subset \mathrm{CP}$ be the category whose objects are von Neumann algebras and whose morphisms are channels, i.e. unital normal completely positive maps. 

The category $\mathrm{CP}$ can then be constructed from $\WAlg$ as follows:
    \begin{itemize}
        \item Objects: Generating 1-morphisms out of the object $r_0$ in $\WAlg$.
        \item Morphisms: Let $X: r_0 \to r$, $Y: r_0 \to s$ be objects of $\mathrm{CP}$. Then a morphism $X \to Y$ is an equivalence class of 2-morphisms of type $Y \to X \otimes E$ in $\WAlg$, where $E: r \to s$ is any 1-morphism. 

        The equivalence relation on these 2-morphisms is defined as follows: we say that two 2-morphisms $V: Y \to X \otimes E$ and $V': Y \to X \otimes E'$ are equivalent iff there exists a partial isometry $\sigma: E \to E'$ satisfying~\eqref{eq:fintertwiner}.
    \end{itemize}
The 2-category $\Chan$ can likewise be constructed from $\WAlg$ by restricting to equivalence classes of \emph{isometric} 2-morphisms of type $Y \to X \otimes E$. 

Note that we recover the original $\mathrm{CP}^{\infty}$-construction if we restrict objects of $\mathrm{CP}$ to generating 1-morphisms of type $r_0 \to r_0$ (since every object of the category $\Hilb$ is a generator). Our construction can therefore be thought of as a sort of horizontal categorification of the original $\mathrm{CP}^{\infty}$-construction. 

\subsection{An application to extremal channels}

The 2-categorical framework we have introduced has allowed us to construct the category of all von Neumann algebras and channels via the generalised $\mathrm{CP}^{\infty}$-construction. To show that 2-categorical dilation theory is useful more generally, we use it to extend Choi's characterisation of extremal points in the convex set of channels between finite-dimensional matrix algebras~\cite[Thm. 5]{Cho75} to extremal points in the convex set of channels between arbitrary von Neumann algebras. 
(C.f.~\cite[Thm. 32]{WW17}, which characterises extremal channels in terms of an injective affine order isomorphism, and~\cite[Prop 4.1]{Mohari2018}, which holds for channels whose target is of the form $B(H)$.)
\begin{prop}\label{prop:choi}
    Let $X:r_0 \to r$ and $Y: r_0 \to s$ be generating 1-morphisms in $\WAlg$. Let $f: \Endo(X) \to \Endo(Y)$ be a channel with minimal representation $(E,V: X \to Y \otimes E)$. Then $f$ is extremal iff, for any $m \in \Endo(E)$,
\begin{align}\label{eq:extremalityintro}
\includegraphics[valign=c]{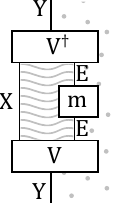}~~=0 ~~~~~\Rightarrow~~~~~m=0
\end{align}
\end{prop}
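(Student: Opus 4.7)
The plan is to reduce the proposition to a 2-categorical Radon--Nikodym theorem for the minimal Stinespring representation, and then apply the standard convex-decomposition argument due to Choi.

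First I would establish a Radon--Nikodym correspondence for the minimal $(E,V)$: the assignment $T \mapsto (a \mapsto V^\dagger \circ (a \otimes T) \circ V)$ is an order-preserving bijection from $\{T \in \Endo(E) : 0 \le T \le \id_E\}$ onto the set of normal CP maps $g : \Endo(X) \to \Endo(Y)$ with $0 \le g \le f$ in the L\"owner order. For surjectivity, given $g \le f$, I would take Stinespring representations of $g$ and of $f - g$, form their direct sum to obtain a (generally non-minimal) representation of $f$, and then use initiality of $(E,V)$ to produce an isometric 2-morphism $\sigma$ from $E$ into the direct sum; the operator $T := \sigma^\dagger \circ P_g \circ \sigma$, with $P_g$ the projection onto the first summand, then satisfies $g(a) = V^\dagger \circ (a \otimes T) \circ V$. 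For injectivity, I would extract from the initial-object property a 2-categorical analogue of classical cyclicity: $V^\dagger \circ (a \otimes m) \circ V = 0$ for every $a \in \Endo(X)$ implies $m = 0$.

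Given this bijection, the translation to extremality proceeds as follows. For the forward implication, assuming $f$ extremal and $m \in \Endo(E)$ satisfying the vanishing condition~\eqref{eq:extremalityintro}, I may take $m$ self-adjoint (by splitting into real and imaginary parts) and of norm at most $\tfrac12$, then set $T_\pm := \tfrac12(\id_E \pm m)$ and $h_\pm(a) := V^\dagger \circ (a \otimes T_\pm) \circ V$. Since $\id_E \pm m \ge 0$ and $V^\dagger \circ (\id_X \otimes m) \circ V = 0$, the maps $2h_\pm$ are channels and $f = \tfrac12(2h_+) + \tfrac12(2h_-)$; extremality forces $2h_+ = 2h_-$, and Radon--Nikodym injectivity then gives $T_+ = T_-$, i.e.\ $m = 0$. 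Conversely, given a convex decomposition $f = \lambda g_1 + (1-\lambda)g_2$ with $g_i$ channels, the Radon--Nikodym operator $T_1$ attached to $\lambda g_1$ satisfies $V^\dagger \circ (\id_X \otimes (T_1 - \lambda \id_E)) \circ V = 0$; the hypothesis then forces $T_1 = \lambda \id_E$ and hence $g_1 = f$, and similarly $g_2 = f$.

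The principal technical obstacle will be the injectivity half of the Radon--Nikodym correspondence. Classically this is immediate from the density of $\{(a \otimes \id_E) \circ V(y)\}$ in $X \otimes E$, but in the 2-categorical setting minimality is encoded only through the initial-object property of $(E,V)$, so the faithfulness statement must be extracted more indirectly. I expect to proceed by using positive perturbations $\id_E \pm \epsilon m$ and their square roots to build auxiliary Stinespring representations of $f$, and then invoking the uniqueness part of the universal property to force the induced intertwiners to coincide with $\id_E$, from which $m = 0$ can be read off.
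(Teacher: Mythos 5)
Your proposal is correct and follows essentially the same route as the paper: the forward direction perturbs $f$ by $\pm V^\dagger \circ (-\otimes m)\circ V$ after normalising $m$ to be Hermitian, uses a factorisation $\id_E \pm m = \alpha^\dagger\circ\alpha$ to see the perturbations are CP, and the converse forms the direct-sum representation $(E_1\oplus E_2, \tfrac{1}{\sqrt{2}}(W\oplus Z))$ and compresses the minimality isometry to the first summand. The only real difference is organisational: you package the key lemma as an Arveson-style Radon--Nikodym bijection, whereas the paper proves just the two instances it needs inline, and it resolves the ``injectivity obstacle'' you flag exactly as you sketch --- a $\beta$ with $\beta^\dagger\circ\beta = \id_E - m$ is an endomorphism of the initial object $(E,V)$ of $\Rep{f}$ and hence equals $\id_E$.
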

\begin{ex}[Correspondence between pure states and irreducible modules]
To illustrate the proposition, we will rederive the well-known correspondence between pure states and irreducible modules over von Neumann algebras. We will use the algebraic definition of $\WAlg$. Let $r$ be some von Neumann algebra, let $X: r_0 \to r$ be any generating right $r$-module and let $\mathbb{C}: r_0 \to r_0$ be the 1-dimensional Hilbert space. A normal state of $\Endo(X)$ is a channel $f: \Endo(X) \to \Endo(\mathbb{C})$, which by Theorem~\ref{thm:stinespring} has a minimal representation of the form 
$$
f(a)~~=~~\includegraphics[valign=c]{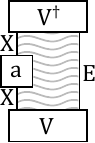}
$$
Here $E$ is a left module over $r$ and $V: \mathbb{C} \to X \otimes E$ is an isometry picking out a unit-norm vector in the Hilbert space $X \otimes E$. We claim that the state is pure (i.e. extremal) iff $E$ is an irreducible $r$-module. For `only if', suppose that $E= E_1 \oplus E_2$ is a direct sum of $r$-modules, and let $\pi_1, \pi_2 \in \Endo(E)$ be the corresponding orthogonal projections satisfying $\pi_1 + \pi_2 = \id_E$. Then define:
\begin{align*}
V_1~~:=~~\includegraphics[valign=c]{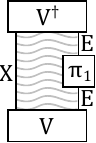} && V_2~~ :=~~\includegraphics[valign=c]{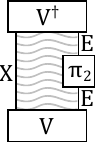}
\end{align*}
By minimality of the representation, neither $V_1$ nor $V_2$ are zero, since $V_1 = 0$ implies by positivity that $(\id_X \otimes \pi_2) \circ V = V$, implying that $V$ is not an initial object in $\Rep{f}$, and likewise for $V_2 = 0$. One may therefore set $m:= V_2 \pi_1 - V_1 \pi_2$ to violate~\eqref{eq:extremalityintro}. For `if', suppose that $E$ is irreducible; then $m$ must be a scalar multiple of $\id_E$. But since $V$ is an isometry, we obtain~\eqref{eq:extremalityintro}.
\end{ex}

\subsection{Further applications}

\paragraph{Supermaps} 

Transformations of channels are called superchannels or supermaps.
Any quantum supermap can be realised as a quantum circuit with a hole where the input channel is placed \cite{CDP08}. This powerful result has lead to applications in many areas where channels are transformed or compared, such as cloning, discrimination, estimation, tomography and programming. In an upcoming paper, we utilise the same graphical calculus as this paper, but with the 2-category of finite-dimensional 2-Hilbert spaces. We prove that any supermap acting on channels between von Neumann algebras satisfies a similar realisation theorem, where the quantum channels are replaced by channels in this generalised sense. This unlocks the potential to generalise already powerful results to a broader range of transformations, by the incorporation of classical as well as quantum information. 

\paragraph{Covariant quantum mechanics}

The results from this paper could generalise to the case where there is a group symmetry. For example, by considering the $C^*$-2-category of $G$-equivariant Hilbert $C^*$-bimodules, for some compact quantum group $G$, one should be able to define internal Homs using the decomposition over irreducible representations on $G$, and hence state a generalised covariant Stinespring's theorem.

\subsection{Related work}

\paragraph{Finite dimension.} We have already mentioned that CP maps between finite-dimensional von Neumann algebras can be identified with morphisms between Frobenius algebras in $\Hilb$ obeying a certain positivity condition \\ ~\cite{Coecke2016}\cite[\S{}7.2.1]{Heunen2019}. In fact, this is a consequence of the more fundamental Theorem~\ref{thm:stinespring}; since in finite dimensions all 1-morphisms are dualisable, the diagrams can be deformed to obtain a map between Frobenius algebras.

\paragraph{Previous Stinespring theorems.} Stinespring's theorem is usually stated for channels of type $A \to B(H)$, where $A$ is an arbitrary von Neumann algebra~\cite{Stinespring1955}. The idea of using bimodules over von Neumann algebras (or, in the $C^*$-algebraic setting, Hilbert $C^*$-correspondences) in order to generalise to an arbitrary target algebra is by no means new (see e.g.~\cite[\S{}5]{Paschke1973}\cite{Kasparov1980}\cite{Pop1986}\cite{Sza09}\cite{WW17}).
In \cite{PY10}, Pellonpää proves both a Stinespring's theorem and a classification of extremal maps. Rather than arbitrary algebras, the targets of these maps are spaces of $A$-sesquilinear maps from $V \times V \ra A$, for some $C^*$-algebra $A$ and $A$-module $V$.

\paragraph{Paschke dilations.} We particularly remark on~\cite{WW17}, which is closely related to this work. In that paper the authors define Paschke dilations: for a channel $f: A \to B$ between von Neumann algebras, a Paschke dilation is a tuple $(P,\rho,\nu)$, where $P$ is a von Neumann algebra, $\rho: A \to P$ is a normal unital $*$-homomorphism, and $\nu: P \to B$ is a normal CP map, such that $f = \nu \circ \rho$ and the tuple $(P,\rho,\nu)$ satisfies a universal property with respect to other such decompositions. The relation between Paschke dilations and the Stinespring representations discussed here is as follows. Let $f: \Endo(X) \to \Endo(Y)$ be a normal CP map, and let $(E,V: Y \to E)$ be a minimal Stinespring representation; then the tuple $(\Endo(X \otimes E), \rho, \mathrm{ad}_V)$ is a Paschke dilation, where $\rho: \Endo(X) \to \Endo(X \otimes E)$ is the embedding $x \mapsto x \otimes \id_E$ and $\mathrm{ad}_V$ is the CP map $x \to V^{\dagger} \circ x \circ V$.

\paragraph{Other approaches to infinite-dimensional categorical quantum mechanics.}

Aside from the $\mathrm{CP}^{\infty}$-construction, several other approaches have been suggested to generalise categorical quantum mechanics to infinite-dimensional systems. Since Frobenius algebras in $\Hilb$ are unital if and only if they are finite-dimensional, non-unital Frobenius algebras have been studied~\cite{AH12}. More radically, one can use non-standard analysis to legitimise working with infinite sums and define a unit for Frobenius algebras in a category of non-standard separable Hilbert spaces~\cite{GG17}. An approach using dagger linearly distributive categories has also been suggested~\cite{CCS21}.

    

\subsection{Acknowledgements}

RA was supported by the Additional Funding Programme for Mathematical Sciences, delivered by EPSRC (EP/V521917/1) and the Heilbronn Institute for Mathematical Research. DV was supported by the European Research Council (ERC) under the European Union’s Horizon 2020 research and innovation programme (grant agreement No.817581).

\section{Background}

\subsection{\texorpdfstring{$W^*$-2-categories}{W*-2-categories}}\label{sec:wcats}

Throughout we will assume that the reader has a grounding in category theory, including 2-category theory, at the level of~\cite{Heunen2019}. We will not assume a strong background in operator algebra. We do, however, assume that the reader is familiar with the definition of a von Neumann algebra.

\begin{defn}[{\cite[Defs. 1.1 and 2.1]{GLR85}}]
    A $\dagger$-category is a category equipped with an involutive contravariant endofunctor which acts as the identity on objects.
    A $W^*$-category is a $\CC$-linear $\dagger$-category with a conjugate linear dagger operation, enriched in the category of Banach spaces, such that
    \begin{itemize}
        \item every morphism $f$ satisfies $||{f^\dag \circ f}|| = || f ||^2$,
        \item every pair of composable morphisms satisfy $||f \circ g|| = || f || \ || g ||$,
        \item for every morphism $f\in \Hom(A,B)$, there exists a morphism $g \in \End{A}$ such that $f^\dag \circ f = g^\dag \circ g$,
        \item every Hom space admits a predual, as a Banach space.
    \end{itemize}
\end{defn}
We say that a linear functor between $W^*$-categories is \emph{normal} if it induces weak $*$-continuous maps on Hom-spaces, and \emph{unitary} if it preserves the dagger. We will here assume that our $W^*$-categories are essentially small, possess infinite direct sums and a zero object, and that all idempotents within them split. To this end, we recall the definition of an infinite direct sum:
\begin{defn}[{\cite[Before Prop. 6.5]{GLR85}}]\label{def:infdirsum}
Let $I$ be some index set. We say that an object $A$ of a $W^*$-category is a direct sum of a family of objects $\{A_i\}_{i \in I}$ if there exist isometries $w_i: A_i \to A$ such that $w_i^{\dagger} \circ w_j = \delta_{ij}$, and $\sum_{i \in I} w_i \circ w_i^{\dagger} = \id_A$ (where the sum is taken to converge in the weak $*$-topology).
\end{defn}
\noindent
Definition~\ref{def:infdirsum} can equivalently be phrased in terms of a universal property~\cite[Thm. 5.1]{Fritz2020}.

We will use the word `2-category' for the weak notion, also known as a bicategory. We will use the symbol $\otimes$ for horizontal composition and $\circ$ for vertical composition. (This will not agree with the usual notation for the Connes fusion tensor product, but as category theorists we prefer to reserve $\boxtimes$ for the monoidal product on a monoidal $W^*$-2-category. 
The 2-categories $\WAlg$ and $\WCat$ are indeed symmetric monoidal under spatial tensor product of von Neumann algebras and tensor product of Hilbert spaces~\cite{BDH14}, respectively, but we do not use that structure in this work.)

\begin{defn}
A $W^*$-2-category is a 2-category such that every $\Hom$-category is a $W^*$-category, the composition functors are linear and unitary, and the associator and unitor isomorphisms for composition are unitary.
\end{defn}
\begin{defn}
    We say that a 2-morphism $f: X \to Y$ in a dagger 2-category, or a morphism $f: X \to Y$ in a dagger category, is:
    \begin{itemize}
        \item An \emph{isometry} if $f^{\dagger} \circ f = \id_X$.
        \item A \emph{coisometry} if $f \circ f^{\dagger} = \id_Y$.
        \item \emph{Unitary} if it is both an isometry and a coisometry. 
        \item A \emph{partial isometry} if $f^{\dagger} \circ f$ is an idempotent.
    \end{itemize}
\end{defn}
\begin{lem}\label{lem:distributive}
    In a $W^*$-2-category composition of 1-morphisms distributes over finite direct sums; that is, we have unitary natural isomorphisms
    \begin{align*}
       (X_1 \oplus X_2) \otimes Y &\cong (X_1 \otimes Y) \oplus (X_2 \otimes Y)
       \\
       X \otimes (Y_1 \oplus Y_2) &\cong (X \otimes Y_1) \oplus (X \otimes Y_2)
    \end{align*}
\end{lem}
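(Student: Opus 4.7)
The plan is to build the isomorphism explicitly using the direct-sum isometries and to exploit bilinearity of horizontal composition together with unitarity (dagger preservation) of the composition functors in a $W^*$-2-category.

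First I would fix notation for the direct sum $X_1 \oplus X_2$: by Definition~\ref{def:infdirsum} we have isometries $w_i : X_i \to X_1 \oplus X_2$ with $w_i^\dagger \circ w_j = \delta_{ij}\,\id_{X_i}$ and $w_1 \circ w_1^\dagger + w_2 \circ w_2^\dagger = \id_{X_1 \oplus X_2}$. The natural candidate for the inclusion of $X_i \otimes Y$ into $(X_1 \oplus X_2) \otimes Y$ is
\begin{equation*}
v_i \;:=\; w_i \otimes \id_Y : X_i \otimes Y \longrightarrow (X_1 \oplus X_2) \otimes Y.
\end{equation*}
I would then verify that the family $\{v_1, v_2\}$ makes $(X_1 \oplus X_2) \otimes Y$ into a direct sum in the sense of Definition~\ref{def:infdirsum}. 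Because the composition functor $\otimes$ is unitary, it preserves daggers, so $v_i^\dagger = w_i^\dagger \otimes \id_Y$, whence
\begin{equation*}
v_i^\dagger \circ v_j \;=\; (w_i^\dagger \circ w_j) \otimes \id_Y \;=\; \delta_{ij}\,\id_{X_i \otimes Y}.
\end{equation*}
Because $\otimes$ is bilinear on 2-morphisms, and since the sum here is finite,
\begin{equation*}
v_1 \circ v_1^\dagger + v_2 \circ v_2^\dagger \;=\; (w_1 \circ w_1^\dagger + w_2 \circ w_2^\dagger) \otimes \id_Y \;=\; \id_{(X_1 \oplus X_2) \otimes Y}.
\end{equation*}
This exhibits $(X_1 \oplus X_2) \otimes Y$ as a direct sum of $X_1 \otimes Y$ and $X_2 \otimes Y$. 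By the universal property of direct sums (the equivalent formulation mentioned after Definition~\ref{def:infdirsum}), there is a canonical unitary isomorphism $(X_1 \otimes Y) \oplus (X_2 \otimes Y) \xrightarrow{\sim} (X_1 \oplus X_2) \otimes Y$, determined by the $v_i$.

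To conclude I would verify naturality in $X_1, X_2$ and $Y$: for 2-morphisms $f_i : X_i \to X_i'$ and $g : Y \to Y'$, both composites $((f_1 \oplus f_2) \otimes g)$ and $((f_1 \otimes g) \oplus (f_2 \otimes g))$ agree after conjugation by the $w_i \otimes \id_Y$ and $w_i' \otimes \id_{Y'}$, again by bilinearity and functoriality of $\otimes$. The second isomorphism is obtained symmetrically by setting $v_i' := \id_X \otimes w_i$ where $w_i : Y_i \to Y_1 \oplus Y_2$ are the direct-sum isometries for $Y_1 \oplus Y_2$, with identical verifications.

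There is no real obstacle: the statement is a formal consequence of bilinearity and unitarity of horizontal composition in a $W^*$-2-category. The only mild subtlety is bookkeeping around the (unitary) associators and unitors of the bicategory when one writes out the explicit natural isomorphism; since these coherence 2-cells are themselves unitary by hypothesis, they commute harmlessly with the direct-sum data and do not affect the argument.
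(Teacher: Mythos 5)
Your proof is correct and follows essentially the same route as the paper's: both define the candidate isometries $w_i \otimes \id_Y$ and check that they satisfy the defining properties of a direct sum using unitarity (dagger preservation) and linearity of the composition functors. Your write-up simply spells out the verifications and the naturality/coherence bookkeeping that the paper leaves implicit.
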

\begin{proof}
    We prove the first isomorphism; the second is shown similarly. Let $w_i: X_i \to X_1 \oplus X_2$ be the pair of isometric 2-morphisms defining the direct sum over the $X_i$'s. Now consider the pair of morphisms $w_i \otimes \id_Y: X_i \otimes Y \to (X_1 \oplus X_2) \otimes Y$. These are isometries by unitarity of composition, and obey the defining properties of a direct sum by unitarity and linearity of composition. 
\end{proof}
\subsection{\texorpdfstring{The 2-categories $\WAlg$ and $\WCat$}{The 2-categories W*-Alg and W*-Cat}}

We now give precise definitions of the 2-categories $\WAlg$ and $\WCat$ introduced in Section~\ref{sec:introw*}; 
$\WAlg$ first appeared in~\cite{Landsman2000}, under the name $[W^*]$, and was further studied in \cite{BDH14}. $\WCat$ was proven equivalent to $\WAlg$ in~\cite{Yamagami2007}; in that work these categories were referred to as $\mathrm{Fun}$ and $\mathrm{Bimod}$, respectively.

\subsubsection{\texorpdfstring{$\WCat$}{W*-Cat}}

We begin with the definition of $\WCat$, which is straightforward to state.
\begin{defn}
The $W^*$-2-category $\WCat$ is defined as follows:
\begin{itemize}
    \item Objects: $W^*$-categories.
    \item 1-morphisms: Unitary linear normal functors.
    \item 2-morphisms: Bounded natural transformations.
    \item Associator and unitors: Trivial. 
\end{itemize}
\end{defn}
\begin{notation}\label{not:functorcomp}
    In the 2-category $\WCat$ we write composition of functors using the tensor product symbol, going from left to right. For example, let $r,s,t$ be $W^*$-categories, and let $X: r \to s$ and $Y: s \to t$ be functors. Then the functor `$X$ followed by $Y$' is written as $X \otimes Y: r \to t$. Our 2-category $\WCat$ is therefore the opposite of the $W^*$-2-category defined using the common `$Y \circ X$' notation for composition of functors. 
\end{notation}

\subsubsection{Correspondences}\label{sec:correspondences}

We quickly recall the definition of self-dual normal $W^*$-correspondences over von Neumann algebras, which were referred to as \emph{rigged modules} in~\cite{Rie74}. These are equivalent to bimodules and will be useful in the proof of Theorem~\ref{thm:stinespring}, but their tensor product is somewhat intricate, involving two separate completions; therefore for the definition of $\WAlg$ we prefer bimodules and the Connes fusion tensor product. 

\begin{defn}[Hilbert modules]\label{def:hilbmodules}
Let $s$ be a von Neumann algebra. We call a complex vector space $X$ a \emph{ right semi-inner product $s$-module} if:
\begin{itemize}
    \item It has a right action of $s$, in the algebraic sense. 
    \item It possesses a map $\braket{-,-}: X \times X \to s$ which is:
        \begin{enumerate}
            \item $\mathbb{C}$-linear in its right index. 
            \item \label{num:semiinner2} For all $x,y \in X$, $\braket{x,y\cdot a} = \braket{x,y}a$.
            \item For all $x,y \in X$, $\braket{y,x} = \braket{x,y}^*$.
            \item For all $x \in X$, $\braket{x,x} \geq 0$.
        \end{enumerate}
\end{itemize}
If, additionally, $\braket{x,x} = 0 ~ \Rightarrow ~ x=0$, we call $X$ a \emph{right pre-Hilbert $s$-module}. 

If, furthermore, $X$ is complete in the norm 
\begin{equation}\label{eq:hilbmodulenorm}
||x|| := \sqrt{||\braket{x,x}||_s},
\end{equation}
we say that $X$ is a \emph{right Hilbert $s$-module}.

Left Hilbert $s$-modules may be defined similarly, except that the $s$-action is on the left. 
\end{defn}
\begin{defn}[Adjointable maps]
Let $X,Y$ be right Hilbert $s$-modules. We say that a map $f: X \to Y$ is \emph{adjointable} if there exists $f^{\dagger}: Y \to X$ such that:
$$
\braket{f^{\dagger}(y),x} = \braket{y,f(x)} ~~~~~ \forall~~x \in X,y \in Y
$$
An adjointable map is automatically $\mathbb{C}$-linear, $s$-linear, and bounded. The adjoint is unique, and taking the adjoint is involutive.

Equipped with the adjoint and the operator norm defined by~\eqref{eq:hilbmodulenorm}, the adjointable maps $X \to X$ form a unital $C^*$-algebra which we call $\L{}(X)$. 
\end{defn}
\begin{defn}[Correspondences]\label{def:correspondences}
We call a right Hilbert $s$-module $X$ equipped with a unital $*$-homomorphism $\pi_X: r \to \L{}(X)$, which is nondegenerate in the sense that $r \cdot X$ is norm-dense in $X$, an \emph{$r,s$-correspondence}. In order to simplify notation, for any $a \in r$ and $x \in X$ we write $a \cdot x:= \pi_X(a)(x)$.

We call an adjointable map between $r,s$-correspondences which intertwines the $r$-actions  an \emph{adjointable intertwiner}.

If, for every $x,y \in X$, the map $r \to s$ defined by $a \mapsto \braket{x,a \cdot y}$ is weak $*$-continuous, we say that $X$ is \emph{normal}.

If every norm-continuous map $X \to s$ is of the form $\braket{y,-}$ for some $y \in X$, we say that $X$ is \emph{self-dual}. Every self-dual normal correspondence $X$ has a predual and its algebra of adjointable operators $\L{}(X)$ is a von Neumann algebra.
\end{defn}
\noindent
The following construction will be useful in what follows. 
\begin{defn}[Tensor product of a right Hilbert module with a left module]\label{def:corresptensprod}
Let $r$ be a von Neumann algebra. Suppose that $X$ is a right Hilbert $r$-module and $Y$ is a left module for $r$; that is, a Hilbert space equipped with a normal $*$-homomorphism $r \to B(Y)$. One may then form a Hilbert space $X \otimes_r Y$ as follows:
\begin{itemize}
    \item Consider the algebraic tensor product $X \odot_r Y$. This is a complex vector space with a semi-inner product defined by linear extension of 
    \begin{align}\label{eq:innprodcorresptensordef}
    \braket{x \otimes y, x' \otimes y'}:= \braket{y, \braket{x,x'} \cdot y'}
    \end{align}
    \item Quotient by the subspace $\Sigma$ of norm-zero elements to obtain a pre-Hilbert space $X \odot_r Y/\Sigma$. 
    \item Complete $X \odot_r Y / \Sigma$ with respect to the norm to obtain a Hilbert space $X \otimes_r Y$.
\end{itemize}
If $X$ is an normal $r,s$-correspondence, $X \otimes_r Y$ inherits the structure of a normal left $r$-module.

Suppose $f: X \to X'$ is an adjointable map of right Hilbert $r$-modules and $g: Y \to Y'$ is a bounded intertwiner of left $r$-modules. We can then form a bounded linear map $f \otimes_r g: X \otimes_r Y \to X' \otimes_r Y'$, which is defined by $(f \otimes_r g)(x \otimes_r y):= f(x) \otimes_r g(y)$ on elementary tensors.
\end{defn}

\subsubsection{Bimodules}\label{sec:bimodules}

We now move onto the definition of the 2-category $\WAlg$, which is equivalent to $\WCat$, but with an algebraic rather than functorial presentation.
\begin{defn}
Let $r$ be a von Neumann algebra. As we have seen, we say that a pair of a Hilbert space $H$ and a normal (i.e. weak $*$-continuous) unital $*$-homomorphism $r \to B(H)$ is a \emph{left module} over $r$. We say that a pair of a Hilbert space $H$ and a normal unital $*$-homomorphism $r^{\mathrm{op}} \to B(H)$ is a \emph{right module} over $r$. We say that a module is \emph{faithful} if the $*$-homomorphism is injective. 
\end{defn}
\noindent
Next we introduce the standard form of a von Neumann algebra, which will in particular allow us to define identity 1-morphisms in the 2-category $\WAlg$.

\begin{defn}[{\cite{Haagerup1975}}]\label{def:standardform}
    The \emph{standard form} of a von Neumann algebra $r$ is a Hilbert space $L^2(r)$ equipped with faithful left and right module actions $\hat{\pi}_{L}: r \to B(L^2(r))$ and $\hat{\pi}_{R}: r^{\op} \to B(L^2(r))$, an antilinear involution $J: L^2(r) \to L^2(r)$ and a self-dual cone $P \subset L^2(r)$ such that:
    \begin{itemize}
    \item $J\hat{\pi}_L(r) J = \hat{\pi}_L(r)'$. (Recall that $'$ is the standard notation for the commutant of a subalgebra.)
    \item $J \hat{\pi}_L(c) J = \hat{\pi}_L(c^*)$ for all $c \in Z(r)$
    \item $J \xi = \xi$ for all $\xi \in P$
    \item $\hat{\pi}_L(a)J\hat{\pi}_L(a)JP \subseteq P$ for all $a \in r$
    \item $\hat{\pi}_R(a)\xi = J\hat{\pi}_L(a^*)J \xi$ for all $a \in r, \xi \in L^2(r)$
    \end{itemize}
    Here by \emph{self-dual cone} we mean that $P = \{\eta \in L^2(r)~|~ \braket{\eta,\xi} = 0~\forall \xi \in P\}$. The standard form of a von Neumann algebra always exists and is unique up to unique unitary isomorphism. 
\end{defn}
\begin{defn}[Bimodules over von Neumann algebras and intertwiners]
Let $r,s$ be von Neumann algebras. An $r,s$-bimodule is a Hilbert space $M$ equipped with commuting left and right module actions $\pi_{M,L}: r \to B(M)$ and $\pi_{M,R}: s^{\op} \to B(M)$, i.e. $\pi_{M,R}(s^{\op}) \subset \pi_{M,L}(r)'$. For conciseness we will write these module actions as $a \cdot m \cdot b := \pi_{M,L}(a)(\pi_{M,R}(b)(m))$ for $a \in r, b \in s, m \in M$.

For $A,B$-bimodules $M,N$, we call a bounded linear bimodule map $f: M \to N$ an \emph{intertwiner}. 
\end{defn}

\begin{notation}
From now on, in order to keep track of whether we are considering a Hilbert space as a left or right module, or both, we will use subscript lettering. For instance, ${}_rL^2(r)_r$, $L^2(r)_r$ and ${}_r L^2(r)$ are the Hilbert space $L^2(r)$ considered as a $r,r$-bimodule, a right $r$-module and a left $r$-module respectively. In this notation, $\Endo({}_rL^2(r)_r)$ is the space of bounded linear maps on $L^2(r)$ intertwining the left and right $r$-actions, whereas $\Endo(L^2(r)_r)$ is the space of such maps intertwining only the right $r$-action. 
\end{notation}
\noindent
We now recall the following proposition relating bimodules to the self-dual normal correspondences we defined in Section~\ref{sec:correspondences}.
\begin{prop}[{\cite[Thm. 2.2]{Baillet1988}}]\label{prop:baillet}
Let $r,s$ be von Neumann algebras. There is a unitary linear equivalence between the $W^*$-category of self-dual normal $r,s$-correspondences and adjointable intertwiners and the $W^*$-category of $r,s$-bimodules and intertwiners, specified as follows:
\begin{itemize}
    \item \emph{Correspondences to bimodules:} a self-dual normal $r,s$ correspondence $X$ is taken to the $r,s$-bimodule $X \otimes_s  L^2(s)$, where the tensor product is as in  Definition~\ref{def:corresptensprod}. Here the actions of $r$ and $s$ are specified by:
    $$a \cdot (x \otimes \xi) \cdot b := (a \cdot x) \otimes (\xi \cdot b)~~~~~~a \in r, b \in s$$
    An adjointable intertwiner $f: X \to Y$ is taken to the tensor product
    $$
    f \otimes_s \id_{L^2(s)} : X \otimes_s L^2(s)  \to Y \otimes_s L^2(s),
    $$
    defined as in Definition~\ref{def:corresptensprod}.
    \item \emph{Bimodules to correspondences:} an $r,s$-bimodule $X$ is taken to the self-dual normal $r,s$-correspondence $\Hom(L^2(s)_s, X)$, where the vector space $\Hom(L^2(s)_s, X)$ is considered as a right module over \\ $\Hom(L^2(s)_s, L^2(s)_s) \cong s$ by precomposition, and a left module over $r$ by postcomposition using the $*$-homomorphism $\pi_{L,X}: r \to B(X)$. 
    The $s$-valued inner product is specified by 
    $$\braket{f,g}:= \hat{\pi}_L^{-1}(f^{\dagger} \circ g).$$
    An intertwiner $f: X \to Y$ is taken to the adjointable intertwiner 
    $$(f \circ -): \Hom(L^2(s)_s, X) \to \Hom(L^2(s)_s,Y).$$
\end{itemize}
\end{prop}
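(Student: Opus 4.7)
\medskip

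The plan is to exhibit the claimed equivalence by constructing both functors explicitly, then producing natural unitary isomorphisms $X \cong \Hom(L^2(s)_s, X \otimes_s L^2(s))$ for every correspondence $X$ and $\Hom(L^2(s)_s, M) \otimes_s L^2(s) \cong M$ for every bimodule $M$. Along the way I need to verify that both assignments preserve composition, linearity, and the $\dagger$ structure, so that they define unitary linear functors between the two $W^*$-categories.

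First, for the correspondence-to-bimodule direction, I would check that on $X \otimes_s L^2(s)$ the prescribed left $r$- and right $s$-actions commute (because the right $s$-action is carried entirely by $L^2(s)$ while the left $r$-action is carried entirely by $X$), are normal (inherited from normality of $X$ and of the standard form actions), and that $f \otimes_s \id_{L^2(s)}$ is a bounded intertwiner whose adjoint is $f^\dagger \otimes_s \id_{L^2(s)}$, as can be checked on elementary tensors via formula~\eqref{eq:innprodcorresptensordef}. Functoriality and $\dagger$-preservation are then immediate. For the bimodule-to-correspondence direction, I would check that $\Hom(L^2(s)_s, X)$ is a right $s$-module via precomposition using the isomorphism $\Endo(L^2(s)_s) \cong s$ from the standard form, that the left $r$-action by postcomposition commutes with this, that the prescribed $s$-valued inner product $\braket{f,g} = \hat\pi_L^{-1}(f^\dagger \circ g)$ takes values in $\hat\pi_L(s)$ (using the commutation $\hat\pi_R(s^{\op}) \subseteq \hat\pi_L(r)'$ built into Definition~\ref{def:standardform}), and that the resulting correspondence is normal and self-dual. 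Self-duality reduces to a bounded-operator statement: any bounded $s$-linear map $\Hom(L^2(s)_s, X) \to s$ is represented by an element of $\Hom(L^2(s)_s, X)$.

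The natural unitary isomorphisms are the obvious ones: $\eta_X(x)(\xi) := x \otimes \xi$ and $\epsilon_M(f \otimes \xi) := f(\xi)$. For $\eta_X$ one verifies directly from~\eqref{eq:innprodcorresptensordef} that $\braket{\eta_X(x), \eta_X(y)} = \braket{x,y}$ as elements of $s$ (using that the right $s$-action on $L^2(s)$ is implemented via $\hat\pi_R$ and that $L^2(s)$ is faithful as a right $s$-module), and self-duality of $X$ upgrades the isometric embedding into a unitary isomorphism. For $\epsilon_M$, isometry is again a calculation on elementary tensors using the definition of the inner product on the correspondence tensor product. Both maps are natural in their arguments, and naturality plus functoriality produces a unitary linear equivalence of $W^*$-categories.

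The main obstacle is surjectivity (equivalently, density of range) of $\epsilon_M$: one must show that every $r,s$-bimodule $M$ is generated by images of right $s$-linear maps from $L^2(s)$. This is really a statement that $L^2(s)_s$ is a generator for the $W^*$-category of normal right $s$-modules, which is a structural consequence of the standard form — any vector $m \in M$ lies in a cyclic right $s$-submodule, and every cyclic normal right $s$-module arises as the image of a projection in $\Endo(L^2(s)_s^{\oplus I})$ for some index set $I$, giving enough maps $L^2(s) \to M$ to generate $M$ as a Hilbert space. Once this density is in hand, the isometry $\epsilon_M$ extends uniquely to a unitary, and the rest of the proof is bookkeeping.
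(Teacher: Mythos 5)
The paper does not actually prove this proposition: it is imported verbatim from \cite[Thm.~2.2]{Baillet1988} as background, so there is no in-paper argument to compare yours against. Judged on its own terms, your outline is the standard unit/counit proof of this adjoint-equivalence and it is sound. In particular you correctly locate the two points where the real content lives: (i) surjectivity of $\eta_X\colon X \to \Hom(L^2(s)_s, X \otimes_s L^2(s))$, which is exactly where the self-duality hypothesis on $X$ is consumed (any adjointable $T\colon L^2(s)_s \to X\otimes_s L^2(s)$ gives a bounded $s$-linear functional $y \mapsto \hat{\pi}_L^{-1}(T^\dagger \circ \eta_X(y))$ on $X$, which self-duality represents by some $x$, and density of elementary tensors then forces $T = \eta_X(x)$); and (ii) density of the range of $\epsilon_M$, which is the statement that $L^2(s)_s$ generates the normal right $s$-modules — a structural fact about the standard form, as you say. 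Both reductions are correct.

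Two details deserve more care than you give them. First, your justification of $\braket{\eta_X(x),\eta_X(y)} = \braket{x,y}$ points at the wrong action: you invoke $\hat{\pi}_R$ and faithfulness of the right module, but what is actually used is the standard-form identity $\Endo(L^2(s)_s) = \hat{\pi}_R(s^{\op})' = \hat{\pi}_L(s)$ together with injectivity of $\hat{\pi}_L$, since the inner product on $\Hom(L^2(s)_s,-)$ is $\hat{\pi}_L^{-1}(f^\dagger \circ g)$ and the element $\braket{x,y} \in s$ acts on the $L^2(s)$ factor of the relative tensor product through its \emph{left} action in formula~\eqref{eq:innprodcorresptensordef}. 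The conclusion $\eta_X(x)^\dagger \circ \eta_X(y) = \hat{\pi}_L(\braket{x,y})$ is right, but as written the reason is not. Second, you dispose of self-duality and normality of $\Hom(L^2(s)_s, M)$ in a single sentence; this is itself a theorem, not bookkeeping. The clean route is to observe that $\Hom(L^2(s)_s, M)$ is a weak-$*$ closed corner $p\,\Endo(L^2(s)_s \oplus M)\,q$ of a von Neumann algebra and to appeal to Paschke's results on such corners for self-duality and the existence of a predual; without some such argument the bimodules-to-correspondences functor does not visibly land in the stated category. Neither issue is a wrong turn — both are fillable with standard operator-algebraic facts — but they are the places where the argument is currently an assertion rather than a proof.
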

\noindent
There are several equivalent ways to define Connes fusion of bimodules (see e.g.~\cite{Sauvageot1983,Wassermann1998,Takesaki2002,Sherm2003}). Here we use the definition given in~\cite{BDH14}.
\begin{defn}[Connes fusion of bimodules]\label{def:connesfusion}
Let $r$, $s$ and $t$ be von Neumann algebras, and let $M$ and $N$ be $r,s$- and $s,t$-bimodules respectively, with actions $\pi_{L,M}:r \to B(M)$, $\pi_{R,M}: s^{\op} \to B(M)$, $\pi_{L,N}: s \to B(N)$ and $\pi_{R,N}: t^{\op} \to B(N)$. Then the \emph{Connes fusion tensor product} $M \otimes N$ is defined as the completion of the algebraic tensor product 
$$
\Hom(L^2(s)_s,M) \odot_s {}_s L^2(s)_s \odot_s \Hom({}_s L^2(s),N)
$$
with respect to the norm defined by the semi-inner product
$$
\braket{f \otimes \xi \otimes g, f' \otimes \xi' \otimes g'} := \braket{\xi,  ((f^{\dagger} \circ f') 
 \circ (g^{\dagger} \circ g') ) (\xi')}
$$
following the quotient by the subspace of zero-norm vectors. Here $s$ acts on $\Hom(L^2(s)_s,M)$ by precomposition as in Proposition~\ref{prop:baillet}, and on \\ $\Hom({}_sL^2(s),N)$ by precomposition also, but this is a left action because $\Endo({}_sL^2(s)) \cong s^{\op}$. The left and right actions of $r$ and $t$ are specified as follows on elementary tensors:  
$$
a \cdot (f \otimes \xi \otimes g) \cdot b := (\pi_{M,L}(a) \circ f) \otimes \xi \otimes (\pi_{R,N}(b) \circ g). 
$$
One may equivalently define the Connes fusion tensor product more asymetrically as either of
\begin{align*}
\Hom(L^2(s)_s,M) \otimes_s N && M \tilde{\otimes}_{s} \Hom({}_{s} L(s),N)
\end{align*}
where the tensor product on the left is the tensor product of a right Hilbert $s$-module with a left $s$-module, as in Definition~\ref{def:corresptensprod}, and the tensor product on the right is the tensor product of a right $s$-module with a left Hilbert $s$-module, which may be defined analogously. 
\end{defn}
\begin{defn}[Connes fusion of intertwiners]
Let $T_M: M \to M'$ and $T_N: N \to N'$ be intertwiners of $r,s$- and $s,t$-bimodules respectively. Their Connes fusion tensor product is an intertwiner $T_M \otimes T_N: M \otimes N \to M' \otimes N'$ specified by the following map on elementary tensors:
$$
f \otimes \xi \otimes g \mapsto (T_M \circ f) \otimes \xi \otimes (T_N \circ g) 
$$
\end{defn}

\begin{defn}[Associator and unitors for Connes fusion]\label{def:connesassoc}
The associator for the Connes fusion tensor product is defined by the following sequence of unitary natural isomorphisms:
\begin{align*}
    &(M \otimes N) \otimes O \\
    &\cong (M \otimes N) ~\tilde{\otimes}_{t}~ \Hom({}_t L^2(t),O) \\
    &\cong (\Hom(L^2(s)_s,M) \otimes_s N) ~\tilde{\otimes}_{t}~ \Hom({}_t L^2(t),O)\\
    &\cong \Hom(L^2(s)_s,M) \otimes_s (N ~\tilde{\otimes}_{t}~ \Hom({}_t L^2(t),O)) \\
    &\cong \Hom(L^2(s)_s,M) \otimes_s (N \otimes O) \\
    &\cong M \otimes (N \otimes O)
\end{align*}
The tensor unit $r,r$-bimodule is ${}_rL^2(r)_r$, and the left unitor for the Connes fusion tensor product is defined by the following sequence of natural isomorphisms:
\begin{align*}
   L^2(r) \otimes N \cong \Endo(L^2(r)_r) \otimes_r N \cong  r \otimes_r N \cong N
\end{align*}
where the last isomorphism is given by extension of the usual identification of the algebraic tensor product $r \odot_r N \cong N$. The right unitor is defined similarly.
\end{defn}

\begin{prop}
Let $\WAlg$ be the $W^*$-2-category defined as follows:
\begin{itemize}
    \item Objects: Von Neumann algebras. 
    \item 1-morphisms: Bimodules. 
    \item 2-morphisms: Intertwiners of bimodules.
    \item Associator and unitors: As in Definition~\ref{def:connesassoc}.
\end{itemize}
Then $\WAlg$ is equivalent to $\WCat$.
\end{prop}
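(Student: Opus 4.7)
The plan is to construct a 2-functor $\Phi: \WAlg \to \WCat$ and verify that it is a biequivalence, largely reducing to the reconstruction theorem of \cite{Yamagami2007}. On objects, set $\Phi(r)$ to be the $W^*$-category $\Mod_r$ of right $r$-modules and bounded intertwiners. On 1-morphisms, send an $r,s$-bimodule $M$ to the functor $(-\otimes_r M):\Mod_r \to \Mod_s$, where the relative tensor product is computed by passing a right $r$-module to its corresponding correspondence via Proposition~\ref{prop:baillet} and then applying the construction in Definition~\ref{def:corresptensprod}. On 2-morphisms, send an intertwiner $f:M\to M'$ to the natural transformation $(-\otimes_r f)$. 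The compositor and unitor 2-cells for $\Phi$ are induced by the associator and unitors for Connes fusion in Definition~\ref{def:connesassoc}; functoriality, linearity, normality and compatibility with the dagger are all inherited from the bimodule definitions.

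The substantive content is then to show $\Phi$ is biessentially surjective on objects, essentially surjective on 1-morphisms, and fully faithful on 2-morphisms. For \emph{essential surjectivity on objects}, I would invoke the standard reconstruction theorem for $W^*$-categories: under our standing assumptions (essentially small, zero object, infinite direct sums, split idempotents), any such $W^*$-category $\cC$ admits a generator $G$, the endomorphism algebra $r:=\End(G)$ is a von Neumann algebra, and $X\mapsto \Hom(G,X)$ yields a unitary linear equivalence $\cC\simeq \Mod_r$. This is the analogue of Gabriel--Popescu in the $W^*$-setting and is the content of~\cite{Yamagami2007}.

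For \emph{essential surjectivity on 1-morphisms}, given a normal unitary functor $F:\Mod_r \to \Mod_s$, define the candidate bimodule $M:=F(L^2(r))$, where the right $s$-action comes from $F(L^2(r))\in \Mod_s$ and the left $r$-action is obtained from the canonical isomorphism $r\cong \End({}_r L^2(r))$ applied via $F$ to $\End(M)$. One then checks $F\cong (-\otimes_r M)$ first on $L^2(r)$, then on arbitrary direct sums $L^2(r)^{\oplus I}$ using normality and Lemma~\ref{lem:distributive}, and finally on an arbitrary right $r$-module $N$ by writing $N$ as the image of a projection on such a direct sum and using that $F$ preserves splittings of idempotents. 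For \emph{fully faithfulness on 2-morphisms}, a bounded natural transformation $\alpha:(-\otimes_r M)\Rightarrow (-\otimes_r M')$ is determined by its component at $L^2(r)$, which is an $s$-linear map $M\to M'$; naturality with respect to left multiplication morphisms $L^2(r)\to L^2(r)$ forces it to also intertwine the left $r$-action, so it is precisely an $r,s$-bimodule intertwiner, and conversely every such intertwiner manifestly induces a bounded natural transformation.

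The main obstacle is the 1-morphism step: producing a natural unitary isomorphism $F\cong (-\otimes_r F(L^2(r)))$ valid on all right modules, which is the $W^*$-analogue of the Eilenberg--Watts theorem. This is where normality of $F$ is essential, since one must control the comparison map on infinite direct sums and on weak $*$-limits. Since this is carried out in detail in~\cite{Yamagami2007}, the cleanest presentation is to state the equivalence $\Phi$ as above, indicate the inverse pseudofunctor $\cC\mapsto (\End(G_\cC),\Hom(G_{-},G_{-}))$ on a choice of generators, and refer to~\cite{Yamagami2007} for the verification.
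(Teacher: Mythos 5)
Your proposal is correct and lands where the paper does: the paper's entire proof is a citation of \cite[Thm.~2.3]{Yamagami2007} together with the remark that the left-to-right composition convention of Notation~\ref{not:functorcomp} absorbs the opposite-2-category issue, and your sketch of the biequivalence $r \mapsto \Mod\text{-}r$, $M \mapsto (-\otimes_r M)$, with essential surjectivity on objects via a generator and on 1-morphisms via an Eilenberg--Watts argument, is precisely the content of that theorem, so deferring the verification to \cite{Yamagami2007} as you suggest is exactly what the authors do. One small slip: the left $r$-action on $F(L^2(r))$ should be induced from $r \cong \Endo(L^2(r)_r)$ (endomorphisms of the \emph{right} module, i.e.\ left multiplications), not from $\Endo({}_rL^2(r))$, which is $r^{\op}$.
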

\begin{proof}
    The equivalence is shown in~\cite[Thm. 2.3]{Yamagami2007}. Note that Yamagami defines the equivalence as going from the opposite of the functorial 2-category, but because of our choice of notation for composition of functors (Notation~\ref{not:functorcomp}) we do not need to take the opposite 2-category. 
\end{proof}

\subsection{A classification of von Neumann algebras in \texorpdfstring{$\WAlg$}{W*-Alg}}

Equivalence classes of objects in $\WAlg$ correspond to Morita equivalence classes of von Neumann algebras. The up-to-isomorphism classification of von Neumann algebras is instead made at the level of endomorphism algebras of generating 1-morphisms.
\begin{defn}
    Let $\Hom(r_0,r)$ be the $W^*$-category of $1$-morphisms $r_0 \to r$. We say that a 1-morphism $X: r_0 \to r$ is a \emph{generator} in this category, or is \emph{generating}, if every other 1-morphism $r_0 \to r$ is a subobject of a (possibly infinite) direct sum of copies of $X$; that is, there exists an isometric 2-morphism from the object into the direct sum.
\end{defn}

\begin{defn}
Two von Neumann algebras $r$ and $s$ are \emph{Morita equivalent} if any of the following equivalent conditions hold:
\begin{enumerate}
    \item \label{num:morita1}There is an equivalence between the $W^*$-category of left $r$-modules and the $W^*$-category of left $s$-modules.
    \item \label{num:morita2}There is an equivalence between the $W^*$-category of right $r$-modules and the $W^*$-category of right $s$-modules.
    \item \label{num:morita3}$r$ and $s$ are equivalent as objects of $\WAlg$.
    \item \label{num:morita4}There exists some generating right $r$-module $M_r$ such that $s \cong \Endo(M_r)$.
\end{enumerate}
\end{defn}
\noindent
Here the implication $\eqref{num:morita1} \Leftrightarrow \eqref{num:morita2}$ is the content of~\cite[Prop. 8.9]{Rie74}, but follows immediately from the involutive structure at the 1-morphism level of the 2-category $\WAlg$ given by taking the conjugate of a bimodule. The implication $\eqref{num:morita2} \Leftrightarrow \eqref{num:morita3}$ follows from the equivalence 
of $\WAlg$ and $\WCat$.
The implication $\eqref{num:morita2} \Rightarrow \eqref{num:morita4}$ is as follows: there is a generating right $s$-module $L^2(s)_s$ such that $s \cong \Endo(L^2(s)_s)$, and the equivalence with right $r$-modules maps $L^2(s)_s$ to a generating right $r$-module satisfying the same condition. The implication $\eqref{num:morita4} \Rightarrow \eqref{num:morita2}$ can be seen by constructing an equivalence between right $r$-modules and right $s$-modules given that both categories have a generating object with isomorphic endomorphism algebras; we will show how to do this in the proof of Proposition~\ref{prop:vnclassification}.

Although we assume some background in category theory throughout this work, we recall the following definition since we will use it so often in what follows. 
\begin{defn}\label{def:equiv1-morphism}
We say that a 1-morphism $T: r \to s$ in a dagger 2-category is an \emph{equivalence}, or an \emph{equivalence 1-morphism}, if there exists a 1-morphism $T^{-1}: s \to r$ and unitary 2-isomorphisms $T \otimes T^{-1} \cong \id_{r}$ and $T^{-1} \otimes T \cong \id_s$.
\end{defn}
\noindent
Every equivalence $T: r \to s$ can be promoted to an \emph{adjoint equivalence}; that is, an equivalence 1-morphism with the same weak inverse $T^{-1}$, but such that the unitary 2-isomorphisms $T \otimes T^{-1} \cong \id_{r}$ and $T^{-1} \otimes T \cong \id_s$ obey the \emph{snake equations}:
\begin{align*}
\includegraphics[valign=c]{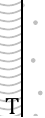}~~=~~\includegraphics[valign=c]{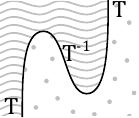}~~=~~\includegraphics[valign=c]{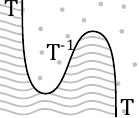}
\end{align*}
\begin{align*}
\includegraphics[valign=c]{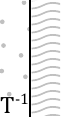}~~=~~\includegraphics[valign=c]{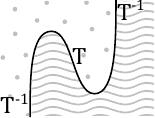}~~=~~\includegraphics[valign=c]{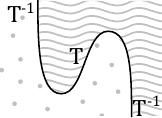}
\end{align*}
Here and throughout we have drawn the unitary 2-isomorphisms as cups and caps.
\begin{defn}
We say that two 1-morphisms $X: r_0 \to r$ and $Y: r_0 \to s$ in $\WAlg$ are \emph{unitarily equivalent} if there exists an equivalence 1-morphism $E: r \to s$ and a unitary 2-morphism $U: Y \to X \otimes E$. 
\end{defn}
\begin{prop*}[Restatement of Proposition~\ref{prop:vnclassification}]
Let $X: r_0 \to r$ and $Y: r_0 \to s$ be generating 1-morphisms in $\WAlg$. The von Neumann algebras $\Endo(X)$ and $\Endo(Y)$ are:
\begin{itemize}
    \item Morita equivalent iff the objects $r$ and $s$ are dagger equivalent.
    \item $*$-isomorphic iff there is a unitary equivalence $X \simeq Y$.
\end{itemize}
\end{prop*}
\begin{proof}
By the fourth characterisation of Morita equivalence above, the algebras $r$ and $\Endo(X)$ (resp. $s$ and $\Endo(Y)$) are Morita equivalent, since $X$ and $Y$ are generators in the category $\mathrm{Mod}$-$r$ of right $r$-modules and the category $\mathrm{Mod}$-$s$ of right $s$-modules respectively. Therefore there exist equivalences $E_X: r \to \Endo(X)$ and $E_Y: s \to \Endo(Y)$ in $\WAlg$ by the third characterisation of Morita equivalence, and the first bullet point follows. 

For the second bullet point, suppose that the algebras $\Endo(X)$ and $\Endo(Y)$ are isomorphic by some unital $*$-isomorphism $\phi: \Endo(X) \to \Endo(Y)$. Since $X$ is a generator of $\Mod$-$r$ and $Y$ is a generator of $\mathrm{Mod}$-$s$, we can define a unitary equivalence $\mathrm{Mod}$-$r \to \mathrm{Mod}$-$s$ which takes $X$ to $Y$, as follows. By definition of a generator, $\mathrm{Mod}$-$r$ is the idempotent completion of the subcategory whose objects are direct sums $\oplus_I X$, and likewise $\mathrm{Mod}$-$s$ is the idempotent completion of the subcategory whose objects are direct sums $\oplus_I Y$. We first define a functor $\hat{E}$ from one subcategory to the other:
\begin{itemize}
    \item On objects: $\hat{E}(\oplus_I X):= \oplus_I Y$.
    \item On morphisms: Consider a morphism $f: \oplus_I X \to \oplus_J X$. Let the isometries defining $\oplus_I X$, $\oplus_J X$, $\oplus_I Y$ and $\oplus_J Y$ be $\{v_i: X \to \oplus_I X\}_{i \in I}$, $\{v_j: X \to \oplus_J X\}_{j \in J}$, $\{w_i: Y \to \oplus_I Y\}_{i \in I}$ and $\{w_j: Y \to \oplus_J Y\}_{j \in J}$ respectively. Now observe that 
    $$
    f = \sum_{i \in I, j \in J} v_j \circ v_j^{\dagger} \circ f \circ v_i \circ v_i^{\dagger}
    $$
    and that the central terms $v_j^{\dagger} \circ f \circ v_i$ are in $\Endo(X)$. We then define 
    $$
    \hat{E}(f):= \sum_{i \in I, j \in J} w_j \circ \phi(v_j^{\dagger} \circ f \circ v_i) \circ w_i^{\dagger}.
    $$
\end{itemize}
That this is a unitary linear functor follows from the properties of a $*$-isomorphism and the defining equations of a direct sum. We observe that the functor between subcategories is full and faithful, since $\phi$ is an isomorphism, and essentially surjective. We now extend the functor between the subcategories to a functor $E: \mathrm{Mod}$-$r \to \mathrm{Mod}$-$s$. For any object $X$ of $\mathrm{Mod}$-$r$, let $v_X: X \to \oplus_{I} X$ be the isometry embedding it as a subobject of a direct sum over the generator. We define the functor $E$ as follows:
\begin{itemize}
    \item On objects: $X$ is taken to the object splitting the idempotent $\hat{E}(v_X \circ v_X^{\dagger}) \in \Endo(\oplus_I Y)$; that is, the object $E(X)$ (unique up to unitary equivalence) such that there is an isometry $w_X: E(X) \to \Endo(\oplus_I Y)$ satisfying $\hat{E}(v_X \circ v_X^{\dagger}) = w_X  \circ w_X^{\dagger}$.
    \item On morphisms: Let $f: X \to Y$ be a morphism. We have $f = v_Y^{\dagger} \circ v_Y \circ f \circ v_X^{\dagger} \circ v_X$. We then define $$E(f):= w_Y^{\dagger} \circ \hat{E}(v_Y \circ f \circ v_X^{\dagger}) \circ w_X.$$
\end{itemize}
This is straightforwardly seen to be a full and faithful unitary linear functor. To see that it is essentially surjective on objects, observe that every object in $\mathrm{Mod}-s$ is unitarily isomorphic to the splitting of some projection $\pi \in \Endo(\oplus_{i} Y)$; then consider the splitting of the projection $E^{-1}(\pi) \in \Endo(\oplus_{i} X)$ as the preimage. By the 
definition of $\WCat$,
we therefore have an equivalence $1$-morphism $E: r \to s$ in 
$\WCat$
such that $X \otimes E \cong Y$. By the equivalence of $\WCat$ and $\WAlg$, one direction is proven. 

In the other direction, suppose that $X$ and $Y$ are unitarily equivalent by a pair $(E, U: Y \to X \otimes E)$: then the Stinespring conjugation
$$
\includegraphics[valign=c]{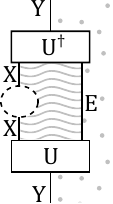}
$$
is clearly a $*$-isomorphism $\Endo(X) \to \Endo(Y)$ with the following inverse:
$$
\includegraphics[valign=c]{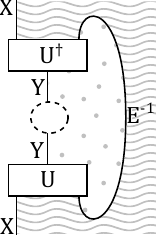}
$$
To show that this is indeed an inverse one can use the snake equations for the adjoint equivalence. 
\end{proof}

\section{Stinespring's theorem}

\begin{thm*}[Restatement of Theorem~\ref{thm:stinespring}]
Let $r,s$ be objects of $\WAlg$, and let $X: r_0 \to r$ and $Y: r_0 \to s$ be generating 1-morphisms. 

Let $f: \Endo(X) \to \Endo(Y)$ be a normal completely positive map. Then there exists a 1-morphism $E: r \to s$ (the `environment') and a 2-morphism $V: Y \to X \otimes E$ in $\WAlg$, such that 
\begin{align}\label{eq:stinespringdef}
f(a) = V^{\dagger} \circ (a \otimes \id_{E}) \circ V
\end{align}
We call the pair $(E,V)$ a \emph{representation} of the CP map $f$. The CP map $f$ is unital iff $V$ is an isometry.

Two pairs $(E,V)$ and $(E',V')$ are representations of the same CP map iff there exists a partial-isometric 2-morphism $\sigma: E \to E'$ such that 
\begin{align}\label{eq:stinespringintertwiner}
V' = (\id \otimes \sigma) \circ V && V = (\id \otimes \sigma^{\dagger}) \circ V'
\end{align}
Indeed, every CP map $f: \Endo(X) \to \Endo(Y)$ has a \emph{minimal representation}: that is, an initial object in the dagger category $\mathrm{Rep}(f)$ whose objects are Stinespring representations $(E,V: Y \to X \otimes E)$ of $f$ and whose morphisms $(E,V) \to (E',V')$ are 2-morphisms $\sigma: E \to E'$ in $\WAlg$ satisfying~\eqref{eq:stinespringintertwiner}.
\end{thm*}
\begin{proof}[Proof in a special case]
We begin by proving the result in the special case where the generating 1-morphisms under consideration are $X:= L^2(r)_r: r_0 \to r$ and $Y:= L^2(s)_s: r_0 \to s$. As we saw in Definition~\ref{def:standardform}, we have $\Endo(L^2(r)_r) \cong r$ and $\Endo(L^2(s)_s) \cong s$.

In this case the proof is similar to the usual proof of Stinespring's theorem for maps $r \to B(H)$ (see e.g.~\cite[Thm. 4.1]{Paulsen2003}), slightly modified since $s$ can here be an arbitrary von Neumann algebra, but in a way that would be obvious to an expert. Since this paper is targeted at an audience who may not have a strong background in operator algebra, we will provide full detail. We begin by defining the $r,s$-bimodule $E$. Consider the algebraic tensor product $r \odot s$. 
This vector space possesses an $s$-valued semi-inner product defined by linear extension of 
\begin{align}\label{eq:innerproddef}
\hpair{a_1 \ot b_1}{a_2 \ot b_2} := b_1^* f(a_1^* a_2) b_2~~~~~~~~a_1,a_2 \in r;~~b_1,b_2 \in s.
\end{align}
Linearity in the right index is clear. By a standard argument, positivity of $f$ implies that $f(a^*) = f(a)^*$ and therefore yields conjugate symmetry. Complete positivity of $f$ implies positive semidefiniteness as follows:
\begin{equation}
    \sum_{i,j} \hpair{a_i \ot b_i}{a_j \ot b_j} = \sum_{i,j}  b_i^* f(a_i^* a_j) b_j \ge 0.
\end{equation}
The space $r \odot s$ also possesses left and right actions of $r$ and $s$ respectively, given by linear extension of 
$$
a' \cdot (a \otimes b) \cdot b' := a'a \otimes bb' ~~~~~~~~a,a' \in r;~~b,b' \in s,
$$
such that the right $s$-action obeys the second compatibility relation w.r.t. the semi-inner product in Definition~\ref{def:hilbmodules}.

The semi-inner product defines a seminorm $||x||^2 := || \braket{x,x} ||_s $. Note that the left action of $r$ on $r \odot s$ is bounded with respect to this seminorm, since $f(a_1^*a^*aa_2) \leq ||a||^2f(a_1^*a_2)$ by positivity of $f$. We quotient $r \odot s$ by the subspace $\Sigma$ of elements with zero norm. The semi-inner product descends to an inner product on $r \odot s/ \Sigma$ by the Cauchy-Schwarz inequality
\begin{align}\label{eq:cschwarz}
\braket{x,y}\braket{y,x} \leq ||y||^2\braket{x,x}
\end{align}
which can straightforwardly be proven from the axioms of an $s$-valued semi-inner product. The left and right actions of $r$ and $s$ likewise descend to $r \odot s/\Sigma$ by boundedness of the left $r$-action and the inequality
\begin{align}\label{eq:boundedright}
||x \cdot b|| \leq ||x|| ||b||, 
\end{align}
which follows straightforwardly from the properties of an $s$-valued semi-inner product  listed in Definition~\ref{def:hilbmodules}. 

We then complete the quotient with respect to the norm induced by the inner product in a manner completely analogous to the completion of a pre-Hilbert space; the proofs are also completely analogous. That is, we define a new space $\hat{E}$ whose elements are equivalence classes of Cauchy sequences in $r \odot s/\Sigma$, where two Cauchy sequences $[x_n]:=x_1,x_2,\dots$ and $[y_n]:= y_1,y_2,\dots$ are said to be equivalent if $\lim_{n \to \infty} ||x_n - y_n|| = 0$. We extend the $s$-valued inner product to $\hat{E}$ by defining 
$$
\braket{[x_n],[y_n]} := \lim_{n \to \infty} \braket{x_n,y_n};
$$
this limit exists and yields a well-defined inner product by completeness of $s$ and the inequality
$$
||\braket{x,y}|| \leq ||x|| ||y||
$$
which follows from~\eqref{eq:cschwarz}. We define a right action of $s$ on $\hat{E}$ by 
$$
[x_n] \cdot b := [x_n \cdot b];
$$
this is well-defined by the inequality~\eqref{eq:boundedright}. We define a left action of $r$ on $\hat{E}$ by 
$$
a \cdot [x_n]:= [a \cdot x_n];
$$
this is well-defined by boundedness of the left $r$-action. It is straightforward to check that all axioms of a right Hilbert $s$-module are obeyed. We now claim that the left $r$-action is a unital $*$-homomorphism from $r$ into the $C^*$-algebra of adjointable intertwiners on $\hat{E}$. To see that the image of $r$ is adjointable, observe:
$$
\braket{[x_n],a\cdot [y_n]} = \lim_{n \to \infty}\braket{x_n,a \cdot y_n} = \lim_{n \to \infty}\braket{a^* \cdot x_n,y_n} = \braket{a^* \cdot [x_n],[y_n]},
$$
Here for the second inequality we have used the definition of the inner product~\eqref{eq:innerproddef}. This also shows immediately that the $r$-action is $*$-preserving. It is clearly a unital homomorphism. It is furthermore nondegenerate in the sense that $r \cdot \hat{E}$ is norm-dense in $\hat{E}$ (indeed, for this it is sufficient to consider the action on the elements $[1_r \otimes b]$). We have therefore obtained an $r,s$-correspondence (Definition~\ref{def:correspondences}). 

Finally, we claim that this $r,s$-correspondence is normal; that is, for each choice of $[x_n],[y_n] \in \hat{E}$ the map $\braket{[x_n],a\cdot [y_n]}: r \to s$ is weak $*$-continuous. For this, let $a_1 \otimes b_1, a_2 \otimes b_2$ be elementary tensors in $\hat{E}$; then 
$$
\braket{a_1 \otimes b_1, a \cdot (a_2 \otimes b_2)} = b_1^{*} f(a_1^{*}aa_2)b_2
$$
is weakly continuous in $a$, by normality of the CP map $f$ and weak continuity of left and right multiplication by a fixed element. Since the elementary tensors span a dense subset of $\hat{E}$, we claim that the map $\braket{[x_n],a\cdot [y_n]}$ is weakly continuous for any $[x_n],[y_n]$. For this, observe that the map $a \mapsto \braket{[x_n],a\cdot [y_n]}$ is weakly continuous iff for any net $a_{\alpha} \to a$ in $r$, we have $\braket{[x_n],a_{\alpha} \cdot [y_n]} \to \braket{[x_n],a \cdot [y_n]}$ in $s$. We will show that this holds for $\braket{x,a \cdot [y_n]}$, where $x$ is in the span of the elementary tensors; one can then extend to the case where $x$ is not in the span in a similar way. Let $p \in r_*$. Then:
\begin{align*}
&|p(\braket{x,a_{\alpha} \cdot [y_n]} - \braket{x,a \cdot [y_n]})| \\ &= 
|p(\braket{x,a_{\alpha} \cdot [y_n]} - \braket{x,a_{\alpha} \cdot y_n} + \braket{x,a_{\alpha} \cdot y_n} - \braket{x,a \cdot y_n}  + \braket{x,a \cdot y_n}  -\braket{x,a \cdot [y_n]})| \\
&\leq |p(\braket{x,a_{\alpha} \cdot [y_n]} - \braket{x,a_{\alpha} \cdot y_n})| + |p(\braket{x,a_{\alpha} \cdot y_n} - \braket{x,a \cdot y_n})| \\&~~~~~~~~~~~+ |p(\braket{x,a \cdot y_n}  - \braket{x,a \cdot [y_n]})|
\end{align*}
Observing that the first and last terms tend to zero as $n \to \infty$ by boundedness of $p$, and that the middle term tends to zero by weak continuity for elementary tensors, we obtain $p(\braket{x,a_{\alpha} \cdot [y_n]}) \to p(\braket{x,a \cdot [y_n]})$. 

We have therefore constructed a normal $r,s$-correspondence $\hat{E}$ (Definition~\ref{def:correspondences}). By~\cite[Prop. 6.10]{Rie74} this can be embedded as a weak $*$-dense subcorrespondence of a self-dual normal $r,s$-correspondence, which we call $\tilde{E}$. Finally, following the prescription of Proposition~\ref{prop:baillet}, we obtain a $r,s$-bimodule $E:= \tilde{E} \otimes_s L^2(s)$.

We must now define an intertwiner $V: L^2(s)_s \to L^2(r)_r \otimes E$ satisfying~\eqref{eq:stinespringdef}. 
Recall from the definition of Connes fusion (Definition~\ref{def:connesfusion}) that $L^2(r)_r \otimes E$ can be identified with the tensor product $\Endo(L^2(r)_r) \otimes_r E \cong r_r \otimes_r E$. We therefore define an intertwiner $V: L^2(s)_s \to r_r \otimes_r E$ as follows:
$$
V(\xi) := 1_r \ot 1_r \ot 1_s \ot \xi~~~~~~~\forall~ \xi \in L^2(s)   
$$
We show that $V^\dag(1_{r} \ot a \ot z \ot \xi) = (f(a)z) \cdot \xi$ for $z \in s$, as follows:
    \begin{align*}
        \hpair{&V^\dag(1_{r} \ot a \ot z \ot \xi)}{\xi'} = \hpair{1_{r} \ot a \ot z \ot \xi}{V(\xi')} \\&= \hpair{1_{r} \ot a \ot z \ot \xi}{1_{r} \ot 1_r \ot 1_s \ot \xi'} = \hpair{(f(a)z)\cdot \xi}{\xi'}
    \end{align*}
    Here for the last equality we have used the definitions~\eqref{eq:innprodcorresptensordef} and~\eqref{eq:innerproddef} of the respective inner products, and the fact that completely positive maps are $*$-preserving.
    Therefore, for $a \in r$, $\sigma \in L^2(s)$, we have that
    \begin{align*}
        (V^\dag & \circ  (\hat{\pi}_L(a) \ot \id_{E}) \circ V) (\sigma) = V^\dag (a \ot 1_r \ot 1_s \ot \sigma) 
        \\
        &= V^\dag (1_r \ot a \ot 1_s \ot \sigma)  = f(a) \cdot \sigma,
    \end{align*}
and so we obtain~\eqref{eq:stinespringdef}. This concludes the proof of existence of a Stinespring representation in the special case.

To finish the special case we must prove that the dagger category of Stinespring representations $\Rep{f}$ has an initial object. Again, this is constructed in much the same way as in the usual construction of a minimal representation (see e.g.~\cite[Prop. 4.2]{Paulsen2003}) for CP maps $r \to B(H)$. Let $(E,V: L^2(s)_s \to L^2(r)_r \otimes E)$ be a Stinespring representation. Since $L^2(r)_r \otimes E$ can be identified with the tensor product $\Endo(L^2(r)_r) \otimes_r E \cong r \otimes_r E \cong E_s$, we can consider $V$ as a map $V: L^2(s)_s \to E_s$. Define $\hat{E} \subseteq E$ to be the closure of the linear span of $\{a \cdot V(\xi)~|~a \in r, \xi \in L^2(s)\}$. Clearly $\hat{E}$ is a sub $r,s$-bimodule, and since the image of $V$ lies in $\hat{E}$, we obtain a new dilation $(\hat{E}, V)$ of $f$. For any other dilation $(E',V')$, a bimodule intertwiner $\sigma: \hat{E} \to E'$ satisfying~\eqref{eq:stinespringintertwiner} is defined and completely determined by
$$
\sigma(a \cdot V(\xi)) := a \cdot V'(\xi).
$$
Therefore the dilation $(\hat{E},V)$ is initial.

\emph{Proof in the general case.}
We now extend the result to any pair of generating 1-morphisms $X: r_0 \to r$ and $Y: r_0 \to s$, which is the only really new part of the proof. Let $A:= \Endo(X)$ and $B:= \Endo(Y)$. Since  $\Endo(X) = A \cong \Endo(L^2(A)_A)$ and $\Endo(Y) = B \cong \Endo(L^2(B)_B)$, by Proposition~\ref{prop:vnclassification} there exist equivalence 1-morphisms $E_X: r \to A$ and $E_Y: B \to s$ and unitary 2-morphisms $U_X : L^2(A)_A \ra X \ot {E_X}$ and $U(Y): Y \to L^2(B)_B \otimes E_Y$. We define
\begin{align*}
    &\mathrm{ad}_{U_X} = U_X^\dag \circ (- \otimes \id_{E_X}) \circ U_X : \Endo(X) \ra \Endo(L^2(A)_A), \nonumber \\
    &\mathrm{ad}_{U_Y} = U_Y^\dag \circ (- \otimes \id_{E_Y}) \circ U_Y : \Endo(L^2(B)_B) \ra \Endo(Y),
\end{align*}
both of which are $*$-isomorphisms, as we saw at the end of the proof of Proposition~\ref{prop:vnclassification}.
Now let $f': \Endo(L^2(A)_A) \to \Endo(L^2(B)_B)$ be the completely positive map such that the following diagram commutes:
    \begin{equation}\label{eq:commdiag}
\begin{tikzcd}
\Endo(X) \arrow[r,  "f"] \arrow[d, "\mathrm{ad}_{U_X}"] & \Endo(Y)   \\
\Endo ( L^2(A)_A ) \arrow[r, "f'"] & \Endo ( L^2(B)_B ) \arrow[u, "\mathrm{ad}_{U_Y}"]
\end{tikzcd}
\end{equation}
Let $\Rep{f}$ be the dagger category of Stinespring representations $(E,V: Y \to X \otimes E)$ of $f: \Endo(X) \to \Endo(Y)$, as defined in the statement of the Theorem; likewise, let $\Rep{f'}$ be the dagger category of Stinespring representations $(E',W: L^2(B)_B \to L^2(A)_A \otimes E')$ of $f': \Endo(L^2(A)_A) \to \Endo(L^2(B)_B)$. 

We now define functors $\Phi: \Rep{f} \to \Rep{f'}$ and $\Phi^{-1}: \Rep{f'} \to \Rep{f}$ as follows:
\begin{itemize}
    \item On objects:
\begin{align*}
\Phi\left(\includegraphics[valign=c]{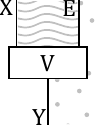}
\right)~~:=~~\includegraphics[valign=c]{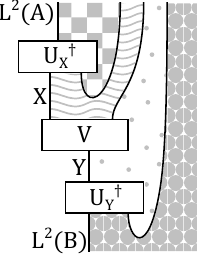}
\end{align*}
\begin{align*}
\Phi^{-1}\left(\includegraphics[valign=c]{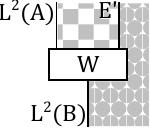}
\right)~~:=~~\includegraphics[valign=c]{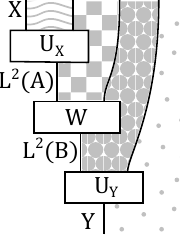}
\end{align*}
\item On morphisms:
\begin{align*}
\Phi\left(\includegraphics[valign=c]{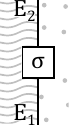}
\right)~~:=~~\includegraphics[valign=c]{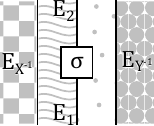}
\end{align*}
\begin{align*}
\Phi^{-1}\left(\includegraphics[valign=c]{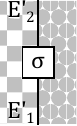}
\right)~~:=~~\includegraphics[valign=c]{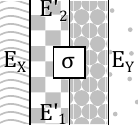}
\end{align*}
\end{itemize}
Here we have left the $r_0$-region blank and shaded the $r$-region with wavy lines, the $s$-region with polka dots, the $A$-region with checkerboard shading and the $B$-region with packed circles.
It is clear that $\Phi$ and $\Phi^{-1}$ are unitary functors. We claim that they are weak inverses witnessing an equivalence of $\Rep{f}$ and $\Rep{f'}$. To see this, observe:
\begin{align*}
(\Phi^{-1} \circ \Phi)\left(\includegraphics[valign=c]{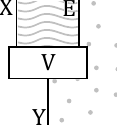}
\right)~~=~~\includegraphics[valign=c]{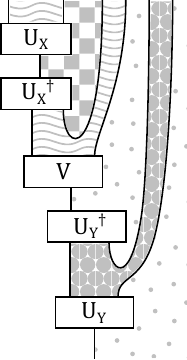}
~~=~~\includegraphics[valign=c]{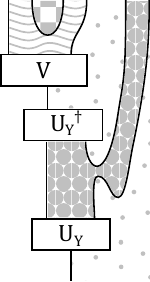}
\end{align*}
\begin{align}\label{eq:phiequiv}
~~=~~\includegraphics[valign=c]{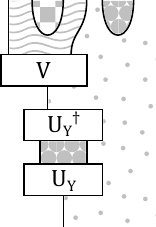}
~~=~~\includegraphics[valign=c]{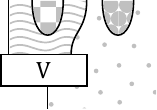}
\end{align}
Here for the first equality we used the definition of $\Phi$ and $\Phi^{-1}$; for the second equality we used unitarity of $U_X$ and a snake equation for the adjoint equivalence $E_Y$; for the third equality we used the fact that the cups and caps of the adjoint equivalence $E_Y$ are inverse; and for the fourth equality we used unitarity of $U_Y$. From the last expression of~\eqref{eq:phiequiv} we see that there is a unitary natural isomorphism $(\Phi^{-1} \circ \Phi) \cong \id_{\mathrm{Dil}(f)}$ whose component on $(E,V)$ is:
$$
\includegraphics[valign=c]{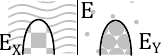}
$$
A similar argument holds for $(\Phi \circ \Phi^{-1})$. The categories $\Rep{f'}$ and $\Rep{f}$ are therefore equivalent. Since $\Rep{f'}$ is nonempty, then $\Rep{f}$ is also, hence we obtain existence of a Stinespring representation $(E,V: X \to Y \otimes E)$ for $f$; and since $\Rep{f'}$ has an initial object, then so does $\Rep{f}$. This completes the proof. 
\end{proof}
\noindent
As a first application of Stinespring's theorem we will consider $*$-homomorphisms in particular.
\begin{lem}\label{lem:unitarytominimal}
Let $(E,V: Y \to E \otimes X)$ be a Stinespring representation for a normal channel $f: \Endo(X) \to \Endo(Y)$. If the 2-morphism $V$ is unitary, then $(E,V)$ is a minimal representation.
\end{lem}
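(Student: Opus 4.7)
The plan is to compare the given pair $(E, V)$ with the minimal representation $(E_m, V_m : Y \to X \otimes E_m)$ supplied by Theorem~\ref{thm:stinespring}, and argue that unitarity of $V$ forces the canonical comparison 2-morphism to be unitary; the pair $(E, V)$ will then be isomorphic in $\Rep{f}$ to an initial object, and hence itself initial.

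By initiality of $(E_m, V_m)$ there is a unique 2-morphism $\tau : E_m \to E$ satisfying $V = (\id_X \otimes \tau) \circ V_m$, and $\tau$ is automatically an isometry (the standard norm-preservation feature of morphisms out of a minimal representation, as visible from the explicit construction of $V_m$ in the proof of Theorem~\ref{thm:stinespring}). Next I would upgrade $V_m$ itself to a unitary. Substituting the intertwining relation into $V \circ V^\dagger = \id_{X \otimes E}$ yields
\[
\id_{X \otimes E} \;=\; (\id_X \otimes \tau) \circ (V_m \circ V_m^\dagger) \circ (\id_X \otimes \tau^\dagger);
\]
composing on the left with $\id_X \otimes \tau^\dagger$ and on the right with $\id_X \otimes \tau$, and using $\tau^\dagger \circ \tau = \id_{E_m}$, collapses this to $V_m \circ V_m^\dagger = \id_{X \otimes E_m}$. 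Consequently $\id_X \otimes \tau = V \circ V_m^\dagger$ is a composite of two unitaries, hence unitary; in particular $\id_X \otimes (\tau \circ \tau^\dagger) = \id_{X \otimes E}$.

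The one nontrivial remaining step is to promote this identity on $X \otimes E$ to the identity $\tau \circ \tau^\dagger = \id_E$ on $E$, i.e., to argue that fusing against a generating 1-morphism is faithful on 2-morphisms. This is where the generator hypothesis on $X$ is essential: since $X$ generates, fusion with $X$ realises the Morita equivalence between $r$ and $\Endo(X)$ (cf.\ the discussion preceding Proposition~\ref{prop:vnclassification}), so in particular the functor $X \otimes_r (-)$ is faithful on 2-morphisms. Applying this faithfulness to $\tau \circ \tau^\dagger - \id_E$ forces $\tau \circ \tau^\dagger = \id_E$, so $\tau$ is unitary; the pair $(\tau, \tau^\dagger)$ then exhibits an isomorphism $(E_m, V_m) \cong (E, V)$ in $\Rep{f}$, and initiality transfers to $(E, V)$. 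I expect the articulation of faithfulness to be the only step requiring genuine care; everything else is routine equational manipulation in the $W^*$-2-category.
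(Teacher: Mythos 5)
Your proof is correct, and its skeleton is the same as the paper's: compare $(E,V)$ with a minimal representation, use the two intertwining relations together with unitarity of $V$ to force the comparison isometry to be a counitary, and conclude that $(E,V)$ is isomorphic in $\Rep{f}$ to an initial object. Where you diverge is in the one step you rightly flag as delicate: cancelling $\id_X \otimes (-)$ from $\id_X \otimes (\tau \circ \tau^{\dagger}) = \id_{X \otimes E}$. The paper sidesteps this entirely by first transporting the problem through the equivalence $\Phi: \Rep{f} \to \Rep{f'}$ constructed in the proof of Theorem~\ref{thm:stinespring}, so that the generating 1-morphism becomes $L^2(A)_A$ and the cancellation is immediate from the unitor $L^2(A)_A \otimes E' \cong E'$; this reuses machinery already in place and needs no new lemma. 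You instead stay with a general generator $X$ and invoke faithfulness of $X \otimes (-)$ on 2-morphisms. That claim is true and your justification via the Morita equivalence $r \simeq \Endo(X)$ is the right idea, but as stated it is only a pointer: to make it airtight you should either note that fusion with the equivalence bimodule ${}_{\Endo(X)}X_r$ is an equivalence of $\Hom$-categories (hence faithful, and forgetting the left action loses nothing), or argue directly that $L^2(r)_r$ embeds isometrically into $\oplus_I X$ because $X$ generates, so $\id_X \otimes m = 0$ forces $\id_{L^2(r)} \otimes m = 0$ and hence $m = 0$ via the unitor (this uses normality of the composition functors to pass through the infinite direct sum). One further small point: your appeal to $\tau$ being an isometry is genuinely needed (a coisometric partial isometry need not be an isometry), but this is exactly the ``unique isometry out of the minimal representation'' property the paper records after Theorem~\ref{thm:stinespring}, so citing it is fine. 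Your route is marginally more self-contained in that it does not depend on the functor $\Phi$; the paper's is shorter given that $\Phi$ has already been built.
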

\begin{proof}
Let $A:= \Endo(X)$ and $B:=\Endo(Y)$, and  let $f'$ be the channel $\Endo(L^2(A)_A) \to \Endo(L^2(B)_B)$ making the diagram~\eqref{eq:commdiag} commute. The functor $\Phi: \Rep{f} \to \Rep{f'}$ maps the unitary $V: Y \to E \otimes X$ to a unitary $V': L^2(B)_B \to L^2(A)_A \otimes E'$ representing $f'$. Since an object of $\Rep{f}$ is mapped to an initial object by the equivalence $\Phi$ iff the object itself is initial, we need only show that $(E',V')$ is a minimal representation. Let $(\hat{E}',\hat{V}')$ be a minimal representation in $\Rep{f'}$ (we know that one exists) and let $\sigma: \hat{E}' \to E'$ be the unique isometry in $\Rep{f'}$. By~\eqref{eq:stinespringintertwiner} we have $V = (\id_{L^2(A)_A} \otimes (\sigma \circ \sigma^{\dagger})) \circ V' = V'$. But then by unitarity of $V'$, we have $\id_{L^2(A)_A} \otimes (\sigma \circ \sigma^{\dagger}) = \id_{L^2(A)_A} \otimes \id_{E'}$, which implies $\sigma \circ \sigma^{\dagger} = \id_{E'}$, since $L^2(A)_A \otimes E' \cong A \otimes_r E' \cong E'$. Therefore $\sigma$ is in fact unitary, and so $(E',V')$ and $(E,V)$ are minimal representations. 
\end{proof}

\begin{prop*}[Restatement of Proposition~\ref{prop:*hom}]
Let $f: \Endo(X) \to \Endo(Y)$ be a CP map and let $(E,V: Y \to X \otimes E)$ be its minimal Stinespring representation. The CP map is a unital $*$-homomorphism iff $V$ is unitary. 
\end{prop*}
\begin{proof}
We already know from Theorem~\ref{thm:stinespring} that the CP map is unital (i.e. a channel) iff $V$ is an isometry.
If $V$ is unitary, then the channel is clearly furthermore a $*$-homomorphism:
$$
\includegraphics[valign=c]{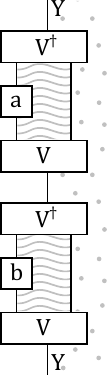}
~~=~~
\includegraphics[valign=c]{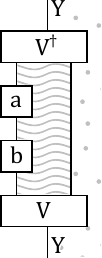}
$$
In the other direction, suppose that the channel is a unital $*$-homomorphism. Let $A:= \Endo(X)$ and $B:= \Endo(Y)$, and let $f': \Endo(L^2(A)_A) \to \Endo(L^2(B)_B)$ be the unital $*$-homomorphism making~\eqref{eq:commdiag} commute. We now observe that there is a canonical unitary representation of $f'$. Indeed, $f'$ defines an left $A$-module structure on $\Endo(L^2(B))$ compatible with the right $B$-module structure (c.f.~\cite[Rem. 3.5]{BDH14}), and the canonical representation is therefore $(_{A}L^2(B)_B,V)$, where $V$ is the canonical unitary isomorphism $L^2(A) \otimes {}_AL^2(B)_B \cong A \otimes_A  {}_AL^2(B)_B \cong L^2(B)_B$. By Lemma~\ref{lem:unitarytominimal}, this representation is minimal. Using the equivalence $\Phi^{-1}: \Rep{f'} \to \Rep{f}$ we obtain a unitary minimal representation of $f$, and the result follows. 
\end{proof}

\section{An application to extremal channels}

\begin{defn}
    Let $A,B$ be von Neumann algebras. We define $\mathrm{CP}(A, B; K)$ to be the convex set of normal CP maps $f: A \ra B$ such that $f(1_A) = K \in B$. In particular, $\mathrm{CP}(A,B ; 1_B)$ is the convex set of channels $A \to B$.
\end{defn}
\noindent
Recall that an element of a convex set is called \emph{extremal} if it cannot be written as a convex combination of two distinct elements of the set.

\begin{prop*}[Generalisation of Proposition~\ref{prop:choi}]
    Let $X:r_0 \to r$ and $Y: r_0 \to s$ be generating 1-morphisms in $\WAlg$. Let $f\in \mathrm{CP}(\Endo(X),\Endo(Y);K)$ be a completely positive map with minimal representation $(E,V: X \to Y \otimes E)$. Then $f$ is extremal iff, for any $m \in \Endo(E)$,
\begin{equation}\label{eq:extremalitycond}
\includegraphics[valign=c]{Pictures/Choi/extremalcond.pdf}~~=0 ~~~~~\Rightarrow~~~~~m=0
\end{equation}
\end{prop*}
\begin{proof}
  This proof follows a very similar approach to~\cite[Proof of Thm. 5]{Cho75} but does not rely on a decomposition of the channel in terms of Kraus operators. For `only if', suppose that $f$ is extremal and that $V^\dag \circ (\id_X \ot m) \circ V = 0$. We may assume without loss of generality that $m$ is Hermitian; indeed, we have that $V^\dag \circ (\id_X \ot (m \pm m^{\dagger})) \circ V = 0$, and if we prove that $m \pm m^{\dagger} = 0$ it will follow that $m=0$. By scaling $m \mapsto m/||m||$ we may further assume that $-\id_E \leq m \leq \id_E$.
  Define
    \begin{equation}
        f_{\pm}(a) = f(a) \pm V^\dag \circ (a \ot m) \circ V.
    \end{equation}    
   Clearly $f_{\pm}(1) = K$. Since $\id_E + m$ is positive, we can write $\id_E+m = \alpha^\dag \circ \alpha$ for some $\alpha \in \Endo(E)$. Then $f_{+}(a) = W^\dag \circ (a \ot \id_E) \circ W$, where $W = (\id_X \ot \alpha) \circ V$, and similarly for $\id_E - m$. Thus $f_{\pm}$ are completely positive and in $\mathrm{CP}(X,Y;K)$; so $f = \frac{1}{2} (f_{+} + f_{-})$, together with extremality of $f$, implies that $f = f_{+}=f_{-}$, and therefore that $V^{\dagger}\circ (a \otimes m)\circ V = 0$ for all $a \in \Endo(X)$.  Now let $\id_E-m = \beta^{\dagger}\circ \beta$, for some $\beta \in \Endo(E)$. Then $\beta$ is a morphism $(E,V) \to (E,V)$ in $\Rep{f}$. But by minimality of $(E,V)$ we see that $\beta$ is the identity on $E$, and so $m=0$. 
   
   For `if', suppose that~\eqref{eq:extremalitycond} holds, and that
    \begin{equation}
        f = \frac{1}{2} (f_1 + f_2) = \frac{1}{2} W^\dag \circ (a \ot \id_{E_1}) \circ W +\frac{1}{2} Z^\dag \circ (a \ot \id_{E_2}) \circ Z,
    \end{equation}
    for $f_1$, $f_2 \in \mathrm{CP}(X,Y;K)$, where $(E_1,W)$ and $(E_2,Z)$ are representations of $f_1,f_2$ respectively. By Lemma~\ref{lem:distributive} we obtain a representation $(E_1 \oplus E_2, \frac{1}{\sqrt{2}}(W \oplus Z))$ of $f$, where $W \oplus Z: Y \to X \otimes (E_1 \oplus E_2)$ is defined as $(\id_X \otimes \iota_1) \circ W + (\id_X \otimes \iota_2) \circ Z$; here $\iota_1: E_1 \to E_1 \oplus E_2$ and $\iota_2: E_2 \to E_1 \oplus E_2$ are the injections defining the direct sum. Since $(E,V)$ is a minimal representation of $f$, there is an isometry $(E,V) \to (E_1 \oplus E_2, \frac{1}{\sqrt{2}}(W \oplus Z))$ in $\Rep{f}$. Composing this isometry with the projection $\pi_1: E_1 \oplus E_2 \to E_1$, we obtain a morphism $\mu: E \to E_1$ such that $W = (\id_X \ot \mu) \circ V$. Now:
    \begin{equation}
        V^\dag \circ V = W^\dag \circ W = V^\dag \circ (\id_X \ot (\mu^\dag \circ \mu)) \circ V.
    \end{equation}
    It follows that $V^\dag \circ (\id_X \ot ((\mu^\dag\circ \mu) - \id_E))\circ  V = 0$, so $\mu$ is an isometry by~\eqref{eq:extremalitycond}. Therefore $f = f_1$. It follows that $f$ is extremal.
    \end{proof}

\bibliographystyle{alphaurl}
\bibliography{cpinf}

\end{document}